\crefname{section}{Section}{Sections}
\crefname{subsection}{\S}{\S\S}
\theoremstyle{plain}
\newtheorem{lemma}{Lemma}[section]
\newtheorem{proposition}[lemma]{Proposition}
\newtheorem{corollary}[lemma]{Corollary}
\newtheorem{theorem}[lemma]{Theorem}
\theoremstyle{nonumberplain}
\newtheorem{theoremN}{Theorem}
\theoremstyle{plain}
\newtheorem{definition}[lemma]{Definition}
\newtheorem{example}[lemma]{Example}
\newtheorem{remark}[lemma]{Remark}
\crefname{definition}{definition}{definitions}
\crefname{ex}{example}{examples}
\crefname{remark}{remark}{remarks}
\crefname{convention}{convention}{conventions}
\crefname{lemma}{lemma}{lemmas}
\crefname{proposition}{proposition}{propositions}
\crefname{corollary}{corollary}{corollaries}
\crefname{theorem}{theorem}{theorems}
\crefname{assumption}{assumption}{Assumptions}
\crefname{equation}{}{}
\theoremstyle{nonumberplain}
\newtheorem{proof}{Proof.}
\newcommand{\A}{{\mathcal A}}
\newcommand{\B}{{\mathcal B}}
\newcommand{\C}{{\mathcal C}}
\newcommand{\G}{{\mathcal G}}
\renewcommand{\H}{{\mathcal H}}
\newcommand{\M}{{\mathcal M}}
\newcommand{\N}{{\mathcal N}}
\renewcommand{\O}{{\mathcal O}}
\newcommand{\R}{{\mathcal R}}
\renewcommand{\S}{{\mathcal S}}
\newcommand{\V}{{\mathcal V}}
\newcommand{\W}{{\mathcal W}}
\newcommand{\bC}{{\mathbb C}}
\newcommand{\bN}{{\mathbb N}}
\newcommand{\Irr}{\operatorname{Irr}}
\newcommand{\id}{\operatorname{id}}
\newcommand{\Mor}{\operatorname{Mor}}
\newcommand{\Rep}{\operatorname{Rep}}
\newcommand{\spn}{\operatorname{span}}
\newcommand{\upchi}{{\raise.35ex\hbox{$\chi$}}}
\title{Non-local games and quantum symmetries of quantum metric spaces}
\author{Kari Eifler\footnote{Department of Mathematics, Texas A\&M University,  \url{keifler@math.tamu.edu}}}
\begin{document}

\date{}
\maketitle

\begin{abstract}
We generalize Banica's construction of the quantum isometry group of a metric space to the class of quantum metric spaces in the sense of Kuperberg and Weaver. We also introduce quantum isometries between two quantum metric spaces, and we show that if a pair of quantum metric spaces are algebraically quantum isometric, then their quantum isometry groups are monoidally equivalent. Motivated by the recent work on the graph isomorphism game, we introduce a new two-player nonlocal game called the metric isometry game, where players can win classically if and only if the metric spaces are isometric. Winning quantum strategies of this game align with quantum isometries of the metric spaces.
\end{abstract}



\section{Introduction} \label{sec:introduction}

Non-local games provide a useful framework for exhibiting the power of quantum entanglement and have recently received significant attention. First proposed by physicist John Stewart Bell in the 1960's, a non-local game is played cooperatively by Alice and Bob against a referee; Alice and Bob may communicate prior to gameplay but are no longer able to communicate once a round begins. The two players may have access to a shared entangled quantum state and measurements performed on the entangled physical system allow them to correlate their answers to the referee in a way they would not be able to do classically (\cite{gilles}). The quantum correlations describing their behaviour can be modelled by quantum mechanics and there are various different mathematical models ($loc$, $q$, $qs$, $qc$) describing the outcome of a quantum experiment. \Cref{sec:preliminaries} recalls definitions and results regarding non-local games and their classical and quantum strategies along with an overview of compact quantum groups.

Work in quantum information theory has led to quantum versions of many concepts in classical mathematics. First introduced in \cite{atserias} by taking two finite graphs $X$ and $Y$, the graph isomorphism game $Iso(X,Y)$ has a winning classical strategy if and only if the two graphs are isomorphic. A natural question to ask is whether such a game exists for weighted graphs, or for other classical structures.

We define a metric isometry game, $Isom(X,Y)$, for two finite metric spaces $(X,d_X)$ and $(Y,d_Y)$. This game has inputs and outputs that are the disjoint union of the set of points in $X$ and $Y$ and has a winning classical strategy if and only if the two metric spaces are isometric. Furthermore, we will show that winning quantum strategies of this game align with quantum isometries of the metric spaces. The metric isometry game also has a close connection to the weighted graph isomorphism game, explained in \Cref{sec:metricisometrygame}.

Synchronous games form a special class of games where Alice and Bob share a set of questions and answers, and within a given round if both players receive the same question, they must produce the same answer. A bi-synchronous game has the additonal restriction that the only way both players may produce the same answer within a given round and win is if they were given the same question. The metric isometry game is a new example of a bi-synchronous (and therefore also synchronous) game.

Each synchronous game $G$ has a $*$-algebra $\A(G)$, defined by generators and relations, that is associated to it. The representation theory of the game $*$-algebra gives information about the existence of perfect strategies for each of the mathematical models listed above (\cite{helton, kim}). For the game $G=Isom(X,Y)$ then $\A(Isom(X,Y))$ is a non-commutative analogue to the function algebra of the space of isometries $X \rightarrow Y$. We say that the two metric spaces are algebraically quantum isometric if $\A(Isom(X,Y)) \neq 0$. It was shown in \cite{helton} that there exist games $G$ for which the $*$-algebra $\A(G)$ may be non-zero even if this algebra has no $C^*$-representations, and in particular, no perfect quantum strategies. In contrast, the graph isomorphism game played with two graphs has the property that if the game $*$-algebra is nontrivial, then it has a nontrivial $C^*$-representation. One motivation for defining the metric isometry game is that it provides another class of examples which exhibit this same phenomenon exhibited for the graph isomorphism game.

In \cite{kw}, Kuperberg and Weaver define a non-commutative analogue of a metric space, called a $W^*$-quantum metric space, which we introduce in \Cref{sec:qmetricspaces}. A $W^*$-quantum metric space is a one-paramenter family of weak$^*$-closed operator systems $\V = \{ \V_t\}_{t \geq 0}$. The intuition is that the $\V_t$ is a non-commutative analogue of pairs of points $(x,y)$ whose distance is at most $t$.

Given a finite metric space, we recall that the isomery group is a natural subgroup of the permutation group $S_n$. Specifically, the isometry group is the subgroup of $S_n$ satisfying the relations $\sigma D = D \sigma$ where $D$ is the distance matrix for the metric space and $\sigma$ is a permutation in the symmetry group. Similarly, in \cite{banica}, Banica defined the quantum isometry group of a finite metric space in a similar way: the quantum isometry group is a quantum subgroup of the quantum permutation group, defined as the quotient of the quantum permutation group by relations mimicking the classical case. In \Cref{sec:isometrygroup}, we generalize Bancia's definition to (possibly infinite) $W^*$-quantum metric spaces and show that the universal object defining the quantum isometry group exists in the finite dimensional case and agrees with Banica's definition.

In \Cref{sec:qisoqmetrics}, we define quantum isometrires between two $W^*$-quantum metric spaces, which generalizes isometries between classical metric spaces. We utilize the techniques in \cite{ours} to prove the following result:

\begin{theoremN}
Consider two quantum metric spaces, and suppose the quantum isometry group between the two quantum metric spaces is non-zero. Then the two quantum isometry groups corresponding to the two quantum metric spaces are monoidally equivalent.
\end{theoremN}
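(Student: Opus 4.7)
The plan is to realize the algebra $\A(Isom(\V,\W))$ introduced in \Cref{sec:qisoqmetrics} as a bi-Galois object (or ``linking algebra'') intertwining the Hopf $*$-algebras associated to the quantum isometry groups of $\V$ and $\W$, and then to invoke the correspondence between bi-Galois objects and monoidal equivalences of comodule categories, following the template of \cite{ours} for quantum graph isomorphisms.

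First I would set up $\A(Isom(\V,\W))$ as the universal $*$-algebra generated by the entries of a matrix $u=(u_{yx})$ satisfying the bi-synchronicity (magic matrix) relations together with the metric-intertwining relations $u\, P_t^{\V} = P_t^{\W}\, u$ for every $t\geq 0$, where $P_t^{\V},P_t^{\W}$ are the projections attached to the operator systems $\V_t,\W_t$. The hypothesis is that this algebra is non-zero. The crucial structural step is to produce commuting coactions
\[
\alpha_L:\A(Isom(\V,\W))\to \A(QI(\W))\otimes \A(Isom(\V,\W)), \qquad u_{yx}\mapsto \sum_{y'} v_{yy'}\otimes u_{y'x},
\]
and
\[
\alpha_R:\A(Isom(\V,\W))\to \A(Isom(\V,\W))\otimes \A(QI(\V)),\qquad u_{yx}\mapsto \sum_{x'} u_{yx'}\otimes w_{x'x},
\]
where $v$ and $w$ are the fundamental magic unitaries of $\A(QI(\W))$ and $\A(QI(\V))$. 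Verifying that these maps descend to the quotient by the defining relations amounts to showing that the composition of a quantum isometry with a quantum self-isometry is again a quantum isometry at the level of generators and relations, which is a direct bookkeeping check using the metric-intertwining relations for each of the three algebras.

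Second, I would verify the Hopf-Galois condition, which in the finite-dimensional setting amounts to bijectivity of the canonical maps
\[
\A(Isom(\V,\W))\otimes_{\A(QI(\V))} \A(Isom(\V,\W)) \to \A(Isom(\V,\W))\otimes \A(Isom(\V,\W))
\]
induced by $\alpha_R$, and symmetrically for $\alpha_L$. Non-triviality of $\A(Isom(\V,\W))$, combined with the magic-matrix relations and the universality of $QI(\V)$ and $QI(\W)$ established in \Cref{sec:isometrygroup}, forces these Galois maps to be isomorphisms by the same sort of argument used in \cite{ours}. Once the bi-Galois structure is in hand, Bichon's theorem (in the $*$-algebraic formulation used there) yields a monoidal $*$-equivalence $\Rep(QI(\V))\simeq \Rep(QI(\W))$ given by cotensoring with $\A(Isom(\V,\W))$, which is exactly the statement of monoidal equivalence of the two compact quantum groups.

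The main obstacle I anticipate is ensuring that the metric-intertwining relations interact cleanly with the coactions. In the graph isomorphism case of \cite{ours}, only finitely many adjacency relations $uA_X = A_Y u$ must be respected, and the argument proceeds by a direct entry-by-entry computation. Here the corresponding data is a whole filtration $\{\V_t\}_{t\geq 0}$ of operator systems, and one must check that iterated magic-matrix products preserve each $\V_t$ simultaneously. In the finite-dimensional setting this is manageable because the filtration has only finitely many jump values, so intertwining at a finite list of projections suffices; extending beyond the finite-dimensional case would require additional faithful-flatness input and may well force the statement to be restricted to the finite-dimensional quantum metric spaces for which \Cref{sec:isometrygroup} already guarantees that the universal object exists.
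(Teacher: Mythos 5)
Your overall strategy---realize $\O(G^{\V,\W})$ as a bi-Galois object linking $\O(G^{\W})$ and $\O(G^{\V})$ and then invoke Bichon's correspondence---is the same as the paper's, and your construction of the commuting coactions and the Galois maps matches the argument of \Cref{thrm:bigalois_extension} in spirit (the paper packages the bijectivity of the canonical maps as connectedness of a two-object cogroupoid with coinversion maps $S_{\V,\W}$, rather than checking the Galois maps directly, but both routes go through \cite{ours} and \cite{bichon1}). However, there is a genuine gap at the final step. For compact quantum groups, the relevant version of Bichon's theorem (\Cref{thrm:bichon}) does not say that a bi-Galois extension alone yields monoidal equivalence; it requires the bi-Galois extension to carry a \emph{bi-invariant state}. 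A purely algebraic Hopf--bi-Galois object gives an equivalence of comodule categories, but monoidal equivalence in the sense of \Cref{defn:monoidallyequivalent} must also respect the $*$-structure ($\varphi(S^*)=\varphi(S)^*$), and this is exactly what the invariant state provides. Your proposal never produces this state. The paper devotes \Cref{thrm:invariant_state} and \Cref{thrm:2} to it: one embeds $G^{\V}<U_{F_\V}^+$ and exhibits a surjection $\sigma:\O(U_{F_\V}^+,U_{F_\W}^+)\to\O(G^{\V,\W})$ compatible with the coactions, which forces the existence of a left- (and by symmetry right-) invariant state; traciality then follows because both $G^{\V}$ and $G^{\W}$ are quantum subgroups of quantum automorphism groups of tracial von Neumann algebras and hence of Kac type. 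Without this step the argument does not close.

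A secondary inaccuracy: you describe the generators of the linking algebra as satisfying magic-unitary relations. That is correct only for classical metric spaces. For general finite-dimensional quantum metric spaces the fundamental unitary $P\in C(G^{\V,\W})\otimes B(\H_1,\H_2)$ is instead subject to the relations making $\delta_{\V,\W}:\M_1\to\M_2\otimes C(G^{\V,\W})$ a trace-preserving unital $*$-homomorphism (\Cref{defn:qmetricsqisom}), together with the condition that conjugation by $P$ carries $\V_t$ into $\W_t\otimes C(G^{\V,\W})$. Your reformulation of the latter as intertwining relations against projections onto the operator systems is plausible in the finite-dimensional tracial setting, but the magic-matrix description should be dropped.
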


If we look at this restricted to the case of classical metric spaces, we have the following result:

\begin{theoremN} \label{thrm:qc}
Two classical metric spaces are algebraically quantum isometric if and only if the graph isomorphism game has a perfect quantum-commuting (qc)-strategy.
\end{theoremN}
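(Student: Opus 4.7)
The plan is to prove both implications, with the forward direction being the technical core. First I would make precise the connection (alluded to in \Cref{sec:metricisometrygame}) between $\A(Isom(X,Y))$ and a graph isomorphism game algebra: associate to the metric space $(X,d_X)$ the family of unweighted graphs $\{G_X^{(t)}\}_t$ indexed by the distance values $t$ occurring in $X$, where $G_X^{(t)}$ has an edge between $x$ and $x'$ iff $d_X(x,x')=t$. The metric isometry relations $PD_X=D_YP$ imposed on a magic unitary $P$ then decompose into the simultaneous adjacency relations $PA_{G_X^{(t)}}=A_{G_Y^{(t)}}P$ for each $t$, and I would verify that this yields a $*$-algebra isomorphism between $\A(Isom(X,Y))$ and the corresponding (colored) graph isomorphism game algebra.

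For the easy direction, a perfect qc-strategy for the graph isomorphism game provides projections in a tracial $C^*$-algebra satisfying all the defining relations, inducing a nonzero $*$-representation of the game algebra; by the isomorphism above, $\A(Isom(X,Y))\neq 0$, which is precisely algebraic quantum isometry.

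For the harder direction, I would invoke the first theorem in the excerpt: if $\A(Isom(X,Y))\neq 0$, the quantum isometry groups of $X$ and $Y$ are monoidally equivalent. Through the Bichon--De Rijdt--Vaes theory of bi-Galois objects, such a monoidal equivalence is implemented by a unital $C^*$-algebra $B$ carrying commuting ergodic actions of the two quantum isometry groups and a canonical invariant state which, because the quantum isometry groups inherit a Kac-type property from the finite metric setting, behaves as a trace on the magic-unitary subalgebra. The magic unitary of generators inside $B$ intertwines $D_X$ and $D_Y$ and lives in a tracial $C^*$-algebra, so it packages exactly the data of a perfect qc-strategy for $Iso(G_X,G_Y)$, and hence for $Isom(X,Y)$.

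The main obstacle will be ensuring that the subalgebra of $B$ generated by the magic unitary entries aligns with $\A(Isom(X,Y))$ in a way that preserves the synchronous structure and the trace simultaneously, so that one can genuinely read off a qc-correlation rather than only an abstract monoidal equivalence. This is the metric analogue of the Lupini--Mancinska--Roberson argument for graph isomorphism, and it should go through once the reduction in the first paragraph is in place, since the distance matrix decomposes linearly into the level-set adjacency matrices and the game relations transfer cleanly across this decomposition.
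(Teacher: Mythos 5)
Your proposal runs on the same underlying machinery as the paper: the identification of $\A(Isom(X,Y))$ with $\O(G^{\V,\W})$ for the associated commutative $W^*$-quantum metric spaces, bi-Galois extension theory, Bichon's criterion for an invariant state, and the Kac-type property to make that state tracial; the easy direction (qc-strategy $\Rightarrow$ nonzero algebra) is as you say. The one substantive difference is the order of deduction, and it is exactly where your flagged ``main obstacle'' lives. You take the monoidal equivalence $G^\V \sim_{mon} G^\W$ as input and then try to extract from Bichon--De Rijdt--Vaes a linking algebra $B$ with a tracial invariant state, after which you must still identify the magic-unitary subalgebra of $B$ with $\A(Isom(X,Y))$; an abstract monoidal equivalence only hands you \emph{some} bi-Galois object, and your proposed fix (the level-set decomposition of $PD_X=D_YP$ into adjacency intertwinings) is a correct but orthogonal observation that does not close that identification. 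The paper avoids the problem by reversing the order: it first shows directly that $\O(G^{\V,\W})$ itself carries the structure of a $G^\V$--$G^\W$ bi-Galois extension via a connected cogroupoid (\Cref{thrm:bigalois_extension}), then applies \Cref{thrm:invariant_state} together with the Kac property to produce a faithful bi-invariant tracial state on $\O(G^{\V,\W})$ itself (\Cref{thrm:2}); the tracial state therefore already lives on the game algebra, \Cref{thrm:reptheoryAG} converts it into a perfect qc-strategy with no further identification step, and the monoidal equivalence comes out as a corollary rather than going in as a hypothesis. Your argument closes once you unwind the proof of the monoidal equivalence theorem you are invoking and observe that the tracial state it constructs is already defined on $\A(Isom(X,Y))$.
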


\subsection*{Acknowledgements}

The author is indebted to her PhD supervisor, Michael Brannan, for many fruitful discussions and his guidance on this project. The author was partially supported by NSF grant DMS-2000331.

\section{Preliminaries} \label{sec:preliminaries}

\subsection{Notation}

When referring to tensor products, we use the symbol $\otimes$ to denote the tensor product of Hilbert spaces or the minimal tensor product of $C^*$-algebras. We use the symbol $\overline{\otimes}$ to denote the normal spatial tensor product of von Neumann algebras.
We use the standard leg numbering notation for linear operators on tensor products of vector spaces.

\subsection{Non-local games}

\subsubsection{Games and strategies} 

A {\it two-player non-local game} is a tuple $\G = (I_A, I_B, O_A, O_B, \lambda)$ where $I_A, I_B, O_A, O_B$ are finite sets representing the inputs and outputs for Alice and Bob and 

\[ \lambda: I_A \times I_B \times O_A \times O_B \rightarrow \{ 0,1 \} \]

is a {\it rule function}. The game is played coopertively by two players, Alice and Bob, against a referee. The game rules are known by all before the game begins, and Alice and Bob may agree on a strategy before beginning to play the game. While the game is being played however, Alice and Bob may no longer communicate and can only rely on the strategy they agreed upon.

A single round of the game consists of the referee giving Alice an input (question) $v$ from her set of possible inputs $I_A$, and giving Bob an input $w$ from his set of possible inputs $I_B$. Without communicating, Alice and Bob reply with outputs (answers) $a \in O_A$ and $b \in O_B$, respectively. They win the round if $\lambda(v,w,a,b) = 1$ and lose the round otherwise. Alice, Bob, and the referee play repeated rounds, and their goal is to win each round.

A game is called {\it synchronous} provided that the two players input sets are the same ($I = I_A = I_B$), as are their output sets ($O = O_A = O_B$), and the rule function satisfies

\[ \lambda(v,v,a,b) = \begin{cases} 0 & a\neq b \\ 1 & a=b \end{cases} \qquad \forall v \in I. \]

Another way to say this is that when Alice and Bob receive the same input, in order to win they must produce the same output. We call a game {\it bi-synchronous} as in \cite{bisynchronous} provided that the game is both synchronous and 

\[ \lambda(v,w,a,a) = \begin{cases} 0 & v \neq w \\ 1 & v=w \end{cases} \qquad \forall a \in O. \]

The strategies that Alice and Bob may utilize break into two categories: either a deterministic strategy or a random strategy. A {\it deterministic strategy} is a pair of functions $h : I_A \rightarrow O_A$ and $k: I_B \rightarrow O_B$ which determine the answers Alice and Bob give to the referee. If they receive $(v,w) \in I_A \times I_B$ then they respond with $(h(v), k(w)) \in O_A \times O_B$. A deterministic strategy wins every round if and only if $\lambda (v,w,h(v),k(w)) = 1$ for all $(v,w) \in I_A \times I_B$. We call such a strategy a perfect deterministic strategy. Given a synchronous game, a {\it perfect deterministic strategy} must satisfy $h=k$.

A {\it random strategy} or {\it probabilistic strategy} is characterized by the fact that on different rounds of the game, Alice and Bob may produce different outputs given the same input pair $(v,w)$. The idea is that even though there might not exist a perfect deterministic strategy to win the game, the players may improve their chance of winning the game by sampling their outputs from some joint probability distribution. As an outsider to the game, one may observe multiple rounds of the game to obtain the conditional probability density $p(a,b|v,w)$ which describes their behavious and represents the probability that given inputs $(v,w) \in I_A \times I_B$ that Alice and Bob produce outputs $(a,b) \in O_A \times O_B$. It's clear that $0 \leq p(a,b|v,w) \leq 1$ and that given a fixed $(v,w) \in I_A \times I_B$, $\sum_{a \in O_A, b\in O_B} p(a,b|v,w) = 1$.

We call a random strategy {\it perfect} if Alice and Bob win each round with probability 1. That is, the strategy is perfect if $\lambda(v,w,a,b) = 0$ implies that $p(a,b|v,w)=0$ for any $(v,w,a,b) \in I_A \times I_B \times O_A \times O_B$.

Assuming different mathematical models, we may get different sets of conditional probabilities $p(a,b|v,w)$. Given $n$ inputs and $k$ outputs, we denote the set of conditional probability densities that belong to each of these sets by $C_t(n,k)$ satisfying

\[ C_{loc}(n,k) \subseteq C_q(n,k) \subseteq C_{qs}(n,k) \subseteq C_{qa}(n,k) \subseteq C_{qc}(n,k) \]

where {\it local (loc)}, {\it quantum (q)}, {\it quantum spatial (qs)}, {\it quantum approximate (qa)}, and {\it quantum commuting (qc)} correspond to the different models. Here, local (or classical) correlations arise when Alice and Bob utilize only a shared probability space while quantum strategies arise from the random outcomes of entangled quantum experiments. We refer the reader to \cite{kim,lupini} for a thorough investigation of the models.

It's known that for $n,k \geq 2$ $C_{loc}(n,k) \neq C_q(n,k)$. It was shown in \cite{prakash} that for $n \geq 5, k \geq 2$ we have $C_{qs}(n,k) \neq C_{qa}(n,k)$, and \cite{coladangelo} showed for $n \geq 5, k\geq 3$ then $C_q(n,k) \neq C_{qs}(n,k)$. In \cite{ji} it was shown there exists $n,k$ such that $C_{qa}(n,k) \neq C_{qc}(n,k)$ which also disproves Connes' embedding conjecture posed in \cite{connesembedding}.

We say that a game has a {\it perfect $t$-strategy} if it has a perfect random  strategy that belongs to one of these models, where $t$ is one of {\it loc, q, qs, qa,} or {\it qc}.

\subsubsection{The $*$-algebra of a synchronous game}

In this subsection, we recall the definition of the $*$-algebra of a synchronous game and summarize the results found in \cite{kim, lupini, Timmermann}.

\begin{definition} \label{defn:staralgebra}
The $*$-algebra of a synchronous game $\G$, $\A(\G)$, is defined as the quotient of the free $*$-algebra generated by $\{ e_{v,a} \mid v \in I, a \in O \}$ subject to the relations

\begin{itemize}
\item $e_{v,a} = e_{v,a}^*$
\item $e_{v,a} = e_{v,a}^2$
\item $1 = \sum_a e_{v,a}$
\item $e_{v,a}e_{w,b} = 0$ for all $v,w,a,b$ such that $\lambda(v,w,a,b) = 0$
\end{itemize}
\end{definition}

The generators $e_{v,a}$ represent the measurement operators for Alice while the algebraic relations above are forced upon us by the restrictions of a winning strategy -- from both the mathematical formalism of quantum mechanics together with the structure of the rule function. In particular, since our game is synchronous and so $\lambda(v,w,a,b) = \delta_{a,b}$ then if $a \neq b$ we have $e_{v,a}e_{v,b} = 0$. Note that this algebra may be zero, and in fact, we are specifically interested in the cases where this algebra is non-zero.

The following theorem proved in \cite{kim} shows that the representation theory of the game $*$-algebra is crucial to understanding the existence of a winning $t$-strategy of the game.

\begin{theorem} \label{thrm:reptheoryAG}
Let $\G = (I,O,\lambda)$ be a synchronous game. Then

\begin{itemize}
\item $\G$ has a perfect deterministic strategy if and only if $\G$ has a perfect loc-strategy if and only if there exists a unital $*$-homorphism from $\A(\G)$ to $\bC$.
\item $\G$ has a perfect $q$-strategy if and only if $\G$ has a perfect $qs$-strategy if and only if there exists a unital $*$-homomorphism from $\A(\G)$ to $B(\H)$ for some non-zero finite dimensional Hilbert space $\H$.
\item $\G$ has a perfect $qa$-strategy if and only if there exists a unital $*$-homomorphism of $\A(\G)$ into the ultrapower of the hyperfinite $II_1$-factor.
\item $\G$ has a perfect $qc$-strategy if and only if there exists a unital $C^*$-algebra $\C$ with a faithful trace and a unital $*$-homomorphism $\pi : \A(\G) \rightarrow \C$.
\end{itemize}
\end{theorem}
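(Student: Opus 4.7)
The plan is to establish all four equivalences through a common template: a perfect synchronous $t$-strategy corresponds to a tracial $*$-representation of $\A(\G)$ in the algebra class dictated by $t$, with the key technical engine being the representation theorem for synchronous correlations, which says that any perfect synchronous correlation can be written as $p(a,b|v,w) = \tau(E_{v,a}E_{w,b})$ where $\{E_{v,a}\}_a$ are projective measurements in a unital $C^*$-algebra carrying a trace $\tau$. Once this representation is in hand, the rule constraints $\lambda(v,w,a,b)=0$ force orthogonality $E_{v,a}E_{w,b}=0$, which is precisely the defining relation of $\A(\G)$, yielding the desired $*$-homomorphism.

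For the forward direction, I would first handle the $qc$ case, since the others follow by restricting the class of ambient algebras. Starting from a perfect $qc$-strategy with commuting PVMs $\{E_{v,a}\}$, $\{F_{w,b}\}$ and a vector state, I would produce the tracial triple $(\C, \tau, \{u_{v,a}\})$ as above, then pass to the GNS quotient so that $\tau$ becomes faithful, and exploit faithfulness together with the inequality $0 \leq u_{v,a}u_{w,b}u_{v,a}$ to deduce $u_{v,a}u_{w,b}=0$ whenever $\tau(u_{v,a}u_{w,b})=0$. The $qa$ case follows by embedding the resulting tracial von Neumann algebra into the ultrapower of the hyperfinite $II_1$-factor. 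The $q \Leftrightarrow qs$ equivalence uses that a synchronous strategy on $\H_A \otimes \H_B$ can be brought into standard form $\H \otimes \ol{\H}$ with the canonical maximally entangled vector via Schmidt decomposition, producing the normalized trace on $B(\H)$. Finally, the $loc$ case reduces to perfect determinism because in a convex combination of deterministic correlations each component must individually satisfy $\lambda(v,w,a,b) = 0 \Rightarrow p^{(i)}(a,b|v,w) = 0$, and synchronicity then forces Alice's and Bob's deterministic functions to coincide, giving an evaluation homomorphism $\A(\G) \to \bC$ via $e_{v,a} \mapsto \delta_{a,h(v)}$.

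Conversely, given a unital $*$-homomorphism $\pi : \A(\G) \to M$ with $M$ in the appropriate class, I would manufacture a perfect $t$-strategy by applying GNS to $M$ with its trace: realize $M$ on $L^2(M,\tau)$ with Alice acting by left multiplication $L_{E_{v,a}}$ and Bob by right multiplication $R_{E_{w,b}}$ where $E_{v,a} := \pi(e_{v,a})$, using $\hat{1}$ as the shared state. These two families automatically commute, form PVMs, and yield $\langle \hat{1}, L_{E_{v,a}} R_{E_{w,b}} \hat{1}\rangle = \tau(E_{v,a}E_{w,b})$, which vanishes whenever $\lambda(v,w,a,b)=0$ by the defining relations of $\A(\G)$. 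For $M = B(\H)$ with normalized trace this recovers the familiar maximally entangled strategy on $\H \otimes \ol{\H}$; for the hyperfinite ultrapower it yields a $qa$-strategy; for a general tracial $C^*$-algebra it yields a $qc$-strategy.

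The main obstacle is the forward $qc$ implication, specifically the step that upgrades $\tau(u_{v,a}u_{w,b})=0$ to the operator identity $u_{v,a}u_{w,b}=0$ while preserving the PVM structure. Faithfulness of $\tau$ closes this gap, but one must first arrange faithfulness by quotienting by the trace-kernel ideal and then verify that this quotient preserves both the completeness relations $\sum_a u_{v,a}=1$ and the projection property; this is exactly where the representation theorem for synchronous correlations does the essential work. A secondary subtlety is the $q \Leftrightarrow qs$ collapse, which relies on the finite-dimensional fact that every perfect synchronous $q$-correlation admits a representation in which Alice and Bob share a single Hilbert space and a maximally entangled vector, a reduction that genuinely fails in infinite dimensions and explains why separating $q$ from $qs$ is a delicate matter in general.
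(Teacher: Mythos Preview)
The paper does not prove this theorem; it is quoted as a result from \cite{kim} (introduced with ``The following theorem proved in \cite{kim}\ldots'') and no argument is supplied in the present paper. So there is no in-paper proof to compare your proposal against.

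That said, your outline is essentially the standard route taken in the cited literature: the synchronous correlation theorem of Paulsen et al.\ gives the tracial representation $p(a,b\mid v,w)=\tau(E_{v,a}E_{w,b})$, faithfulness of $\tau$ after passing to the GNS quotient upgrades $\tau(E_{v,a}E_{w,b})=0$ to $E_{v,a}E_{w,b}=0$, and the converse is the left/right multiplication construction on $L^2(M,\tau)$. Your treatment of the $loc$, $q\Leftrightarrow qs$, and $qc$ cases matches the arguments in \cite{kim} and related papers. The one place where your sketch is thinner than the actual proofs is the $qa$ bullet: ``embedding the resulting tracial von Neumann algebra into the ultrapower'' is not quite the mechanism---one rather characterizes $qa$ as the closure of $q$, takes a sequence of finite-dimensional representations realizing approximating correlations, and forms their ultraproduct inside $\mathcal{R}^\omega$; conversely, a representation into $\mathcal{R}^\omega$ is lifted along the quotient $\prod M_{n_k}\to\mathcal{R}^\omega$ to approximate finite-dimensional data. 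This is a genuine additional step beyond the $qc$ argument, not a corollary of it, so if you were writing this out in full you would need to supply that ultralimit construction explicitly.
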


\begin{definition}
We say a synchronous game $\G$ has a {\it perfect $A^*$-strategy} if $\A(\G)$ is non-zero. We say $\G$ has a {\it perfect $C^*$-strategy} if there exists a unital $*$-homomorphism form $\A(\G)$ into $B(\H)$ for some non-zero Hilbert space $\H$.

In general, these strategies are not physical and there is no guarantee of a corresponding physical correlation.
\end{definition}

\subsection{Compact quantum groups}

We will now review the basics of compact quantum groups, their actions and representations. The reader may be referred to references \cite{woronowicz, NeshveyevTuset, woronowicz2, chirvasitu} for details.

\begin{definition} \label{defn:cqg}
A {\it compact quantum group} is a unital $C^*$-algebra $\A$ equipped with a unital $*$-homomorphism called {\it comultiplication} $\Delta: \A \rightarrow \A \otimes \A$ such that

\begin{itemize}
\item $(\Delta \otimes \id) \circ \Delta = (\id \otimes \Delta) \circ \Delta$ as homomorphisms (co-associativity)
\item the spaces $\spn \{ (a \otimes 1) \Delta(b) \mid a,b \in \A \}$ and $\spn \{ (1 \otimes a) \Delta(b) \mid a,b \in \A \}$ are dense in $\A \otimes \A$ (the cancellation property)
\end{itemize}
\end{definition}

Motivation for this definition comes from the example given by $\A = C(G)$, the space of all continuous complex functions on a fixed compact group $G$. Here, comultiplication $\Delta: C(G) \rightarrow C(G \times G) \cong C(G) \otimes C(G)$ is given by $(\Delta(f))(g,h) = f(g \cdot h)$ so $\Delta$ captures the group operation at the level of $C(G)$.

Conversely, every compact quantum group $(\A, \Delta)$ whose underlying $C^*$-algebra $\A$ is commutative is of the form $\A = C(G)$ for some compact group $G$ \cite{woronowicz}.

\begin{remark}
Based on this commutative example, we use the notation $\A = C(G)$ for general compact quantum groups.
\end{remark}

\begin{remark}
An equivalent way to write \Cref{defn:cqg} is to view compact quantum groups as special Hopf $*$-algebras (see Section \ref{subsecmonoidal} and \cite{dijkhuizen}), with the the structure maps of the underlying compact group giving rise to a number of unital homomorphisms with $\Delta$ as above.
\end{remark}

We look at a few examples of compact quantum groups which will be used later. First, we define a {\it magic unitary} over a unital $*$-algebra $\A$ to be an $n \times n$ matrix $U = [u_{ij}]_{i,j}$ with entries $u_{ij} \in \A$ which satisfies

\begin{itemize}
\item $u_{ij} = u_{ij}^* = u_{ij}^2$
\item $\sum_{i=1}^n u_{ij} = 1 = \sum_{j=1}^n u_{ji}$
\end{itemize}

In the case where $\A$ is the complex numbers, a magic unitary matrix is simply a permutation matrix.

\begin{example}
The {\it quantum permutation group} $S_n^+$ \cite{wang} is the compact quantum group $(\A,\Delta)$ where $\A = C(S_n^+)$ is the universal $C^*$-algebra generated by the entires of an $n \times n$ magic unitary matrix $u = [u_{ij}]$. Comultiplication is given by the formula $\Delta(u_{ij}) = \sum_k u_{ik} \otimes u_{kj}$.

If we were to instead consider the universal $C^*$-algebra generated by commuting entries of an $n \times n$ magic unitary matrix, we would get the function algebra $C(S_n)$ of the symmetry group $S_n$. Thus, we should view $C(S_n^+)$ as a non-commutative symmetry group of a finite set of $n$ points with no extra structure. There always exists a quotient map from $C(S_n^+)$ into $C(S_n)$, allowing us to realize $S_n$ as a subgroup of $S_n^+$.

It was shown that for $n \geq 4$, Wang showed in \cite{wang} that $S_n^+$ is non-commutative, that is, that even classical objects such as four points with no additional structure can have quantum symmetries unseen when restricting to classical groups.
\end{example}

\begin{example}
The {\it universal unitary quantum group} $U_F^+$ associated to a matrix $F \in GL_n(\bC)$ \cite{wang} is the universal $*$-algebra generated by the entries of a $n \times n$ matrix $u = [u_{ij}]$ for which $(1 \otimes F)[u_{ij}^*](1 \otimes F^{-1})$ is a unitary in $M_n(C(U_F^+))$. The comultiplication map $\Delta$ is defined the same as for $S_n^+$.
\end{example}

Let $G = (C(G), \Delta)$ be a compact quantum group and $\H$ a finite dimensional Hilbert space of dimension $n$. In general, a {\it representation of $G$} is an invertible element $v \in \B(\H) \otimes C(G)$ such that $(\id \otimes \Delta)(v) = v_{12}v_{13}$. If we fix an orthonormal basis $(e_j)$ for $\H$, then a representation $v$ corresponds to an invertible matrix $v=[v_{ij}] \in M_n(C(G))$ such that $\Delta(v) = \sum_{k=1}^n v_{ik} \otimes v_{kj}$. We call $v$ a {\it unitary representation} if it is unitary. Given an infinite dimensional Hilbert space $\H$, one can similarly define an infinite dimensional unitary representation to be some $u \in M(K(\H) \otimes C(G))$ such that $(\id \otimes \Delta)(u) = u_{12}u_{13}$. We refer the reader to \cite{NeshveyevTuset} for details.

Fix two representations $v \in B(\H_v) \otimes C(G)$ and $u \in B(\H_u) \otimes C(G)$. A {\it morphism} between $u$ and $v$ is a linear map $T: \H_u \rightarrow \H_v$ that satisfies $(T\otimes 1)u = v(T \otimes 1)$, and we let $\Mor(u,v)$ be the Banach space of all morphisms between $u$ and $v$. We call representations $u$ and $v$ {\it equivalent} if $\Mor(u,v)$ contains an invertible element. Two representations are said to be {\it irreducible} if $\Mor(u,u) = \bC$. The set of equivalence classes of irreducible representations is denoted $\Irr(G)$. It's easy to show that if $u$ is a unitary representation, then $\Mor(u,u)$ is a $C^*$-algebra. We may consider the {\it direct sum} $u \oplus v \in B(\H_u \oplus \H_v) \otimes C(G)$, the {\it tensor product} $u \otimes v := u_{12} v_{13} \in B(\H_u \otimes \H_v) \otimes C(G)$ and {\it conjugate representation} $\overline{u} := [u_{ij}^*] \in B(\overline{\H}) \otimes C(G)$.

The representation category of a compact quantum group $G$ is defined to be the category whose objects are equivalence classes of finite dimensional representations of $G$ and is denoted $\Rep(G)$. An interested reader can refer to \cite{NeshveyevTuset} for more details.

The fundamental theorem on finite dimensional representations of compact quantum groups is analogous to the classical case. It is stated as follows:

\begin{theorem}(\cite{woronowicz})
Let $G$ be a compact quantum group. Every finite dimensional representation of $G$ is equivalent to a unitary representation, and every finite dimensional unitary representation of $G$ is equivalent to a direct sum of irreducible representations.

$C(G)$ is densely linearly spanned by the matrix elements of irreducible unitary representations of $G$.
\end{theorem}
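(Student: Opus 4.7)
The plan is to anchor the entire argument on the Haar state $h : C(G) \to \bC$, whose existence and bi-invariance $(\id \otimes h)\Delta = h(\cdot)1 = (h \otimes \id)\Delta$ I would either cite as Woronowicz's prior result or prove first by a weak$^*$ Ces\`aro averaging argument applied to convolution powers of an arbitrary state, using the cancellation property in \Cref{defn:cqg}.

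For the first assertion, given a finite dimensional representation $v \in B(H) \otimes C(G)$, I would set $P := (\id \otimes h)(vv^*) \in B(H)$. Positivity of $h$ and invertibility of $vv^*$ force $P$ to be strictly positive. The key step is to show $v(P \otimes 1)v^* = P \otimes 1$; for this I apply $\id \otimes \id \otimes h$ to the identity $(\id \otimes \Delta)(vv^*) = v_{12}(vv^*)_{13}v_{12}^*$ that follows from $(\id \otimes \Delta)v = v_{12}v_{13}$, observing that the left hand side collapses to $P \otimes 1$ by left invariance of $h$ while the right hand side evaluates to $v(P \otimes 1)v^*$. Then $w := (P^{-1/2} \otimes 1)\, v\, (P^{1/2} \otimes 1)$ satisfies $ww^* = 1$, is again a representation because the conjugating factor slips past the coproduct, and is equivalent to $v$ via the intertwiner $P^{-1/2}$. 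The second assertion is nearly formal once the first is in hand: for unitary $v$, the morphism space $\Mor(v,v)$ is a finite dimensional unital $C^*$-subalgebra of $B(H_v)$, and any orthogonal projection in it cuts off a subrepresentation whose orthogonal complement is also a subrepresentation, so decomposing along minimal central projections and then along minimal projections in each summand produces the required direct sum of irreducibles.

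The main obstacle is the density statement, which is essentially the Peter-Weyl theorem for compact quantum groups. My plan is to define $\mathcal{O}(G) \subseteq C(G)$ as the linear span of matrix coefficients of all finite dimensional representations and to observe, using the direct sum, tensor product, and conjugate representation constructions recalled in the preliminaries, that $\mathcal{O}(G)$ is a unital $*$-subalgebra; by the first two parts of the theorem this subalgebra then already equals the span of matrix coefficients of irreducible unitary representations, so only density remains. I would prove density by approximating an arbitrary element of $C(G)$ by finite sums coming from the cancellation property in \Cref{defn:cqg}, and then expanding the resulting slice maps as linear combinations of matrix coefficients by identifying finite dimensional $\Delta$-invariant subspaces inside the GNS Hilbert space of $h$ and resolving each such subrepresentation into irreducibles via the first two parts. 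This is the delicate step, since producing the requisite $\Delta$-invariant subspaces in the non-commutative setting is exactly the content of Woronowicz's extension of Peter-Weyl and where the bulk of the technical work has to live.
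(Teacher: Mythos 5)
The paper offers no proof of this statement at all --- it is quoted verbatim from \cite{woronowicz} as background --- so there is no ``paper's proof'' to compare against; I can only assess your argument on its own terms. Your first two parts are the standard and correct arguments: the averaging $P = (\id\otimes h)(vv^*)$, the identity $v(P\otimes 1)v^* = P\otimes 1$ obtained by slicing $(\id\otimes\Delta)(vv^*) = v_{12}(vv^*)_{13}v_{12}^*$ with $h$, the observation that $P\geq\ep 1$ because $vv^*$ is positive and invertible, and the conjugation by $P^{\pm 1/2}$ producing an invertible coisometry (hence unitary) representation intertwined with $v$ by $P^{1/2}$ --- all of this is right, as is the reduction of complete reducibility to the fact that $\Mor(v,v)$ is a finite dimensional $C^*$-algebra whose projections cut off complemented subrepresentations (note this uses unitarity of $v$ to see that $\Mor(v,v)$ is $*$-closed, which you implicitly do).

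The genuine gap is the density statement, and you have essentially acknowledged rather than closed it. Two specific points. First, to make $\O(G)$ a $*$-subalgebra you invoke the conjugate representation $\overline{v}=[v_{ij}^*]$, but with the paper's definition a representation must be \emph{invertible}, and invertibility of $\overline{v}$ for a unitary $v$ is itself a nontrivial theorem of Woronowicz (it fails for merely invertible $v$ and is where the matrices $F$ with $F\overline{v}F^{-1}$ unitary enter, as in the definition of $U_F^+$); your sketch should either prove it or restrict to the linear span, where only the comultiplication identity for $\overline{v}$ is needed. Second, the final step --- producing enough finite dimensional $\Delta$-invariant subspaces in $L^2(G,h)$ to exhaust $C(G)$, typically via compact convolution operators or an explicit corepresentation-theoretic orthogonality argument, and handling the possible non-faithfulness of $h$ on $C(G)$ --- is precisely the content of the quantum Peter--Weyl theorem, and your proposal defers it rather than supplying it. As a plan of attack it is the correct one, but as a proof it is complete only for the first paragraph of the statement.
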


\begin{definition} \label{defn:monoidallyequivalent} (\cite{bichon2, bichon3})
We say that two compact quantum groups $G_1$ and $G_2$ are monoidally equivalent, written $G_1 \sim_{mon} G_2$, if there exists a bijection between the equivalence classes of irreducible representations given by $\varphi: \Irr(G_1) \rightarrow \Irr(G_2)$ together with linear isomorphisms

\[ \varphi : \Mor(u_1 \otimes \ldots \otimes u_n, v_1 \otimes \ldots \otimes v_m) \rightarrow \Mor(\varphi(u_1) \otimes \ldots \varphi(u_n), \varphi(v_1) \otimes \ldots \otimes \varphi(v_m)) \]

such that for any morphisms $S, T$

\begin{itemize}
\item $\varphi(1_{G_1}) = 1_{G_2}$ where $1_{G_i}$ is the trivial representation of $G_i$
\item $\varphi(S \circ T) = \varphi(S) \circ \varphi(T)$ whenever $S \circ T$ is well-defined
\item $\varphi(S^*) = \varphi(S)^*$
\item $\varphi(S \otimes T) = \varphi(S) \otimes \varphi(T)$
\end{itemize}
\end{definition}

A special class of compact quantum groups are the compact matrix quantum groups. A {\it compact matrix quantum group} is a compact quantum group $G$ with a finite dimensional representation $u$ such that $C(G) = C^*(u_{ij}, u_{ij}^*)$. We call $u$ a fundamental representation of $G$.

Most compact quantum groups are presented as compact matrix quantum groups: both examples above are examples of compact matrix quantum groups.

\section{The metric isometry game} \label{sec:metricisometrygame}

A {\it finite metric space} is a finite set $X$ equipped with a finite metric $d: X \times X \rightarrow [0,\infty)$. Throughout this paper, we assume all metric spaces are finite, unless stated otherwise. Given two finite metric spaces $(X,d_X)$ and $(Y,d_Y)$, we say the two metric spaces are {\it isometric} if there is a bijection $f$ between $X$ and $Y$ that preserves distances, that is, $d_X(x,x') = d_Y(f(x),f(x'))$ for all $x, x' \in X$. If they are isometric, we write $X \simeq Y$.

To define the Metric Isometry Game, $Isom(X,Y)$, we set the inputs/outputs to be $I = O = X \sqcup Y$. Suppose the inputs for Alice and Bob are $v$ and $w$ while the outputs are $a$ and $b$, respectively. The players win the round if all of the following are satisifed:

\begin{enumerate}
\item $v$ and $a$ are from different spaces
\item $w$ and $b$ are from different spaces
\item If $v$ and $w$ are from the same metric space, then $d_\bullet(v,w) = d_\bullet(a,b)$
\item If $v$ and $w$ are from different spaces, then $v=b$ if and only if $w = a$.
\end{enumerate}

Note that condition 3 along with the fact that $d(v,w) = 0$ iff $v = w$ forces the game $Isom(X,Y)$ to be bi-synchronous.

\subsection{Connections to the directed graph isomorphism game}

We may recast $Isom(X,Y)$ in terms of graphs, and connect it to the graph isomorphism game. The graph isomorphism game consideres two finite, simple, undirected graphs and was first introduced in \cite{atserias}, and later studied in \cite{lupini, ours}. It is then natural to consider a modification of the graph isomorphism game for weighted graphs.

From a metric space $(X,d)$, one can derive a weighted graph $G_X = (V(G),E(G), w)$ arising from the metric space. We let $V(G) = X$ and let $E(G) = X \times X$ so that $G$ is the complete graph on $|X|$ vertices. We set the weight of the edge between $x$ and $y$ to be $d(x,y) = d(y,x)$.

We note immediately that $d(x,x) = 0$ implies that the graph has no loops, $d(x,y)=d(y,x)$ for all $x,y \in X$ implies that the graph is undirected, and the triangle inequality $d(x,z) \leq d(x,y) + d(y,z)$ for all $x,y,z \in X$ implies that the ``cheapest'' way to get from $x$ to $z$ (or vice versa) is directly.

The adjacency matrix for this graph, $A_X = [a_{ij}^X]_{i,j\in X}$, is given by $a_{ij}=d(i,j)$. It is a symmetric matrix with zeros along the diagonal.

We will now define the {\it weighted graph isomorphism game}, an expansion of the well-studied graph isomorphism game and show it is analogous to the Metric Isometry Game described earlier. We start the game with two simple, weighted graphs, $G$ and $H$.

\begin{definition} \label{def:mincompletegraph}
From a simple weighted graph $G$, we define the {\it minimum complete graph} $G'$ to be the complete graph on the same vertices with weight between vertices in $G'$ to be the cheapest path weight between the two vertices in $G$.
\end{definition}

From graphs $G$ and $H$, we obtain the minimum complete graphs $G'$ and $H'$. We set the inputs and outputs for the weighted graph isomorphism game to be $I = O = V(G) \sqcup V(H) = V(G') \sqcup V(H')$.

The referee will give two inputs, $v$ and $w$ to the two players, respectively Alice and Bob. They will reply with the outputs $a$ and $b$. We say that the players win that round if the following criteria are satisfied:

\begin{enumerate}
\item $v$ and $a$ are from different graphs
\item $w$ and $b$ are from different graphs
\item If $v$ and $w$ are from the same graph, then $d_\cdot(v,w) = d_\cdot(a,b)$ where the distance is the minimum path length of the minimum complete graphs
\item If $v$ and $w$ are from different graphs, then $v=b$ if and only if $w = a$.
\end{enumerate}

The directed graph isomorphism game is a reformulation of the metric isometry game.

\begin{theorem} \label{thrm:isomxyisomorphic}
Take two metric spaces $(X,d_X)$ and $(Y,d_Y)$ and let $G'$ and $H'$ be the corresponding minimum complete graphs.

\begin{enumerate}
\item $(X,d_X)$ and $(Y,d_Y)$ are isometric
\item The Metric Isometry Game played on $(X,d_X)$ and $(Y,d_Y)$ has a winning classical strategy
\item The minimum complete graphs $G'$ and $H'$ are isomorphic
\item The Weighted Graph Isomorphism game for $G'$ and $H'$ has a winning classical strategy
\end{enumerate}

\end{theorem}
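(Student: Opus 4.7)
The plan is to prove the equivalences by establishing the chain $1\!\Leftrightarrow\!3$, $1\!\Leftrightarrow\!2$, and then $2\!\Leftrightarrow\!4$, since each link is essentially a matter of unpacking definitions.

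First I would handle $1\!\Leftrightarrow\!3$, which I expect to be essentially tautological. Given $(X,d_X)$, the weighted graph $G_X$ is by construction the complete graph on $X$ with edge weights $d_X$, and by the triangle inequality the cheapest path between two vertices of $G_X$ is the direct edge, so $G_X' = G_X$. Consequently, a bijection $f: X \to Y$ preserves distances iff it induces a weighted graph isomorphism $G_X' \to G_Y'$. I would record this as a short observation and then use it throughout.

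Next, for $1\!\Leftrightarrow\!2$, I would use the fact that since $Isom(X,Y)$ is synchronous, any winning classical (deterministic) strategy is given by a single function $h : X \sqcup Y \to X \sqcup Y$ used by both players. I then read off the rules one by one: conditions (1) and (2) force $h(X) \subseteq Y$ and $h(Y) \subseteq X$; condition (4), applied to the two cases $v \in X, w \in Y$, says $h|_Y$ and $h|_X$ are mutual inverses, hence both are bijections; condition (3), applied to $v,w \in X$, gives $d_X(v,w) = d_Y(h(v),h(w))$, so $h|_X$ is an isometry. Conversely, given an isometry $f : X \to Y$, defining $h(x) = f(x)$ for $x \in X$ and $h(y) = f^{-1}(y)$ for $y \in Y$ produces a function satisfying all four winning conditions. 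This establishes $1\!\Leftrightarrow\!2$.

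For $2\!\Leftrightarrow\!4$, I would argue that under the identification from $1\!\Leftrightarrow\!3$ the two games have literally the same input/output sets and rule functions: on the metric side rule (3) uses $d_X, d_Y$, while on the weighted graph side it uses the minimum path distance in $G_X' = G_X$ and $H_Y' = H_Y$, which are the same numbers by the preceding paragraph. Hence a function $h$ is a winning deterministic strategy for $Isom(X,Y)$ iff it is a winning deterministic strategy for the weighted graph isomorphism game on $G',H'$. Together with the fact (standard for synchronous games) that a perfect classical strategy exists iff a perfect deterministic strategy exists, this finishes the equivalence.

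The main potential obstacle is the bookkeeping around the bi-synchronous structure, namely verifying that condition (4) indeed forces $h|_X$ and $h|_Y$ to be mutual inverses (rather than merely compatible in a weaker sense), and that one does not inadvertently need distinct functions for Alice and Bob. Beyond that the argument is a careful reading of the rule function and of \cref{def:mincompletegraph}; no deeper ingredient is required.
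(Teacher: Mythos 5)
Your proposal is correct and follows essentially the same route as the paper: use synchronicity to reduce to a single deterministic function, read off the four rules to get an isometry (with condition (4) giving that $h|_X$ and $h|_Y$ are mutual inverses), and observe that the graph and metric versions coincide via $G_X' = G_X$. The only cosmetic difference is that you close the loop with $2\Leftrightarrow 4$ by identifying the two games outright, whereas the paper repeats the $1\Leftrightarrow 2$ argument to get $3\Leftrightarrow 4$.
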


\begin{proof}
$(1) \Leftrightarrow (3)$ is straighforward to check.

$(1) \Rightarrow (2)$: Suppose $(X,d_X)$ and $(Y,d_Y)$ are isometric, and $\varphi : X \rightarrow Y$ is an isomorphism. If the player receives point $x \in X$, then they should respond with $\varphi(x)$ and similarly, if the player receives vertex $y \in Y$, then they should respond with $\varphi^{-1}(y)$. This will win the $(X,d_X)-(Y,d_Y)$ metric isometry game and it is indeed a classical strategy.

$(2) \Rightarrow (1)$: Since the metric isometry game is a synchronous game, there must exist a winning deterministic strategy. Indeed, let Alice's answers be given by the function $f_A : X \sqcup Y \rightarrow X \sqcup Y$, and let Bob's answers be given by the function $f_B : X \sqcup Y \rightarrow X \sqcup Y$. Since the game is synchronous, the two functions must be equal and so call this function $f:= f_A = f_B$.

Note that the restriction $f|_{X} : X \rightarrow Y$ is an isomorphism from $X$ to a subset of $Y$. Similarly, the restriction $f|_{Y} : Y \rightarrow X$ is an isomorphism from $Y$ to a subset of $X$. This tells us that $X$ and $Y$ are isometric and that $f|_{X}$ and $f|_{Y}$ are isomorphisms.

We are left to show that $f|_{X} = f|_{Y}^{-1}$. That is, we want to show that for all $x \in X$, $x = f|_{Y} (f|_{X}(x)$. Consider the case where Alice receives $x \in X$ and Bob receives $f(x)$. The deterministic strategy dictates that Alice will respond with $f(x)$, and so because $y_A = x_B$ then the winning strategy criteria implies that $x_A = y_B$ and so Bob is forced to respond with $x$. This is true for all $x \in X$, and so $f|_{X} = f|_{Y}^{-1}$.

The proof of $(3) \Leftrightarrow (4)$ is the same as the proof above.
\end{proof}

\subsection{Game $*$-algebra of $Isom(X,Y)$}

We next want to know what the $*$-algebra of the metric isometry game is.

\begin{theorem}
Let $(X,d_X)$ and $(Y,d_Y)$ be metric spaces, each with size $n$. Then $\A(Isom(X,Y))$ is generated by $4n^2$ self-adjoint idempotents $\{e_{z,w} \mid z,w \in X \sqcup Y \}$ satisfying

\begin{enumerate}
\item $e_{x,x'} = 0$ for all $x,x' \in X$ and $e_{y,y'} =0$ for all $y,y' \in Y$
\item $e_{x,y}^2 = e_{x,y}^* = e_{x,y}$ for all $x \in X, y \in Y$
\item for $x \in X$ and $y \in Y$, $e_{x,y} = e_{y,x}$
\item $\sum_{y \in Y} e_{x,y} =1$ for all $x \in X$
\item $\sum_{x \in X} e_{x,y} =1$ for all $y \in Y$
\item $e_{x,y}e_{x,y'}=0$ for all $y \neq y'$
\item $e_{x,y}e_{x',y}=0$ for all $x \neq x'$
\item for any $x \in X$ and $y \in Y$, then

\[ \sum_{x' \in X} d_{X}(x,x') e_{x',y} = \sum_{y' \in Y} d_{Y}(y',y) e_{x,y'}\]
\end{enumerate}
\end{theorem}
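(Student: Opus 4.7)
The plan is to verify each of the relations (1)--(8) by specializing \Cref{defn:staralgebra} to the specific rule function $\lambda$ of the game $Isom(X,Y)$. The algebra $\A(Isom(X,Y))$ has $|I||O|=(2n)^2=4n^2$ generators $e_{v,a}$ indexed by $(v,a)\in(X\sqcup Y)^2$, and I will trace each listed relation back to either the generic defining relations of $\A(\G)$ or a specific case in which $\lambda(v,w,a,b)=0$.

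For (1), I would apply the synchronous property: with $v=w=x\in X$ and $a=b=x'\in X$, condition~1 of the rule function (requiring $v$ and $a$ to lie in different spaces) fails, so $\lambda(x,x,x',x')=0$ and hence $(e_{x,x'})^2=0$; combined with idempotence this forces $e_{x,x'}=0$. The analogous argument in $Y$ handles the other half. Relation (2) is then immediate, and (4),(5) follow from $\sum_a e_{v,a}=1$ after discarding the zero summands via (1) (with (5) additionally invoking (3)). The orthogonality relations (6) and (7) come from synchronicity: for $v=w=x$ and $a=y\neq y'=b$ we have $\lambda(x,x,y,y')=0$, which gives (6), and the symmetric choice $v=w=y$ combined with (3) yields (7).

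For (3), I would use condition~4 of the rule function with $v=x\in X$, $w=y\in Y$, and $a=y$: the condition $w=a$ holds, so $v=b$ is forced, i.e.\ $b=x$. Thus $\lambda(x,y,y,b)=0$ for every $b\neq x$, giving $e_{x,y}e_{y,b}=0$ for $b\in X\setminus\{x\}$ (and for $b\in Y$ the factor $e_{y,b}$ vanishes by (1)). Multiplying through by $1=\sum_b e_{y,b}$ yields $e_{x,y}=e_{x,y}e_{y,x}$, and taking adjoints gives $e_{x,y}=e_{y,x}e_{x,y}$. The symmetric argument with $v$ and $w$ swapped produces $e_{y,x}=e_{y,x}e_{x,y}$, and comparing these two expressions yields $e_{x,y}=e_{y,x}$.

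The main content is in relation (8), which I expect to be the one genuine obstacle. Condition~3 of the rule function, applied to $v,w\in X$ with $a,b$ forced into $Y$ by (1), yields the pointwise vanishing $e_{x,y}e_{x',y'}=0$ whenever $d_X(x,x')\neq d_Y(y,y')$. To recast this as the stated linear identity, I would assemble the generators into the $n\times n$ matrix $E=[e_{x,y}]_{x\in X,\,y\in Y}$ over $\A$; the relations (4)--(7) say exactly that $E$ is a magic unitary, so $E^*E=EE^*=I$ in $M_n(\A)$. Then
\[ (E^*D_XE)_{y,y'} = \sum_{x,x'} d_X(x,x')\,e_{x,y}e_{x',y'} = d_Y(y,y')\sum_{x,x'} e_{x,y}e_{x',y'} = d_Y(y,y'), \]
using the pointwise vanishing to replace $d_X(x,x')$ by $d_Y(y,y')$ on the surviving terms, and the row/column sums of $E$ to evaluate the outer sum. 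Hence $E^*D_XE=D_Y$, equivalently $D_XE=ED_Y$, which is exactly (8) read entrywise. The only real technical point is recognizing (8) as the matrix commutation $D_XE=ED_Y$; once this repackaging is made, the verification itself is a one-line computation.
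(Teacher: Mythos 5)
Your proof is correct and follows essentially the same route as the paper's: each relation is read off from the defining relations of the game $*$-algebra together with the specific instances where $\lambda$ vanishes, and relation (8) is obtained by trading $d_X(x,x')$ for $d_Y(y,y')$ on the surviving products and collapsing the row/column sums to $1$. The only cosmetic differences are that you kill $e_{x,x'}$ via $e_{x,x'}^2=0$ plus idempotence rather than the paper's row-sum argument, and you package (8) as the matrix identity $E^*D_XE=D_Y$ conjugated by the magic unitary, whereas the paper carries out the equivalent entrywise computation directly.
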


\begin{proof}
The definition of the *-algebra gives us property (2). Similarly, it's quickly clear that (6) and (7) follow from the winning criteria of the game.

To see criteria (1), consider $x,x' \in X$. Then for all $z,w \in X \cup Y$, we have $\lambda(x,z,x',w)=0$. Therefore, for a fixed $z$, we have

\begin{align*}
e_{x,x'} &=  e_{x,x'} \left( \sum_{z \in X \cup Y} e_{w,z} \right) \\
&= \sum_{z \in X \cup Y} e_{x,x'} e_{w,z} \\
&= \sum_{z \in X \cup Y} \lambda(x,z,x',w) e_{x,x'}e_{w,z} \\
&= 0.
\end{align*}

So $e_{x,x'}=0$. Similarly, for $y,y' \in Y$, $e_{y,y'}=0$.

Criteria (4) and (5) follow easily: for any $x \in X$, then

\[ 1 = \sum_{z \in X \cup Y} e_{x,z} = \sum_{z \in Y} e_{x,y}. \]

To prove criteria (3), take some $x \in X$ and $y \in Y$ then

\[ e_{y,x} = e_{y,x} \left( \sum_{z \in Y} e_{x,z} \right) = \sum_{z \in Y} e_{y,x}e_{x,z} = \sum_{z \in Y} \lambda(y,x,x,z) e_{zx}e_{xz} = e_{yx}e_{xy}. \]

Similarly, $e_{xy}=e_{xy}e_{yx}$. So then

\[ e_{xy} = e_{xy}^* = (e_{xy}e_{yx})^* = e_{yx}^*e_{xy}^* = e_{yx}e_{xy} = e_{yx}. \]

Finally, for all $x \in X$ and $y \in Y$, and recalling that $\lambda(x',x,y,y')=1$ if and only if $d_X(x,x') = d_Y(y,y')$, we see that

\begin{align*}
\sum_{x' \in X} d_X(x,x') e_{x',y} &= \sum_{x' \in X} d_X(x,x') e_{x',y} \left( \sum_{y' \in Y} e_{x,y'} \right) \\
&= \sum_{x' \in X, y' \in Y} d_X(x,x') e_{x',y} e_{x,y'} \\
&= \sum_{x' \in X, y' \in Y} d_X(x,x') \lambda(x',x,y,y') e_{x',y}e_{x,y'} \\
&= \sum_{x' \in X, y' \in Y} d_Y(y',y) e_{x',y} e_{x,y'} \\
&= \left( \sum_{x' \in X} e_{x',y} \right) \sum_{y' \in Y} d_Y(y',y) e_{x,y'} \\
&= \sum_{y' \in Y} d_Y(y',y) e_{x,y'}
\end{align*}
\end{proof}

\begin{remark} \label{remark:AIsom}
Let $U=[e_{x,y}]_{x\in X, y\in Y}$. Then the above relations imply that $U$ is a magic unitary matrix and that $(1 \otimes D_X)U = U(1 \otimes D_Y)$. Equivalently, for the weighted graph isomorphism for minimal complete corresponding graphs $G', H'$ we have that $(1 \otimes A_{G'})U = U(1 \otimes A_{H'})$ where we note that the adjacency matrices for the graphs $G'$ and $H'$ are identical to the distance matrix for their corresponding metrics.

Thus, the game $*$-algebra $\A(Isom(X,Y)$ can be viewed a non-commutative analogue of the space of isometries from $X$ to $Y$.
\end{remark}

\begin{definition} \label{defn:qstrategy}
Motivated by \Cref{thrm:reptheoryAG}, for two metric spaces $(X,d_X)$ and $(Y,d_Y)$ we define

\begin{itemize}
\item $X \cong_q Y$ if and only if there exists $d$ and projections $E_{x,y} \in M_d$ such that $U = (E_{x,y})$ is a unitary in $M_n(M_d)$ and $(1\otimes D_X) U = U(1 \otimes D_Y)$.

\item $X \cong_{qa} Y$ if and only if there exists projections $E_{x,y} \in \R^\omega$ such that $U = (E_{x,y}) \in M_n(\R^\omega)$ is a unitary and $(1\otimes D_X) U = U(1 \otimes D_Y)$. Here, $R^\omega$ is the hyperfinite $II_1$ factor, and interested readers can learn more in \cite{AnPopa}.

\item $X \cong_{qc} Y$ if and only if there exists projections $E_{x,y}$ in some $C^*$-algebra $\A$ with a tracial state such that $U = (E_{x,y}) \in M_n(\A)$ is a unitary and $(1\otimes D_X) U = U(1 \otimes D_Y)$.

\item $X \cong_{C*} Y$ if and only if there exists projections $E_{x,y}$ in some Hilbert space $\H$ such that $U = (E_{x,y}) \in M_n(B(\H))$ is a unitary and $(1\otimes D_X) U = U(1 \otimes D_Y)$.
\end{itemize}

\end{definition}

\begin{remark}
Given two metric spaces $(X,d_X)$ and $(Y,d_Y)$ with corresponding minimal complete graphs $G'$, $H'$, then since the two game $*$-algebras $\A(Isom(X,Y))$ and $\A(Iso(G',H'))$ are the same, we can see that, using the notation from \cite{ours}, for any $t \in \{ loc, q, qa, qc, C^*, A^* \}$ we have that $X \cong_t Y$ if and only if $G' \cong_t H'$.
\end{remark}

\section{$W^*$-quantum metric spaces} \label{sec:qmetricspaces}

The definitions of a $W^*$-quantum metric space and the theorems that follow in this section were introduced in \cite{kw}. They have since been studied in \cite{swift}.

A non-commutative analogue of a metric space, $\V = \{\V_t\}_{t \geq 0}$, was defined in \cite{kw} using the language of von Neumann algebras, called a $W^*$-quantum metric. The intuition behind their definition is that each family $\V_t$ is a non-commutative analogue of pairs of points $(x,y)$ whose distance is at most $t$, while motivation for this definition comes primarily from the standard model of quantum error correction. The definition of a $W^*$-quantum metric is related to other models of quantum metric spaces: Connes notion of a spectral triple produces a $W^*$-quantum metric \cite{connes2}, and every $W^*$-metric produces Reiffel's Lipschitz seminorm \cite{rieffel2}.

\begin{definition} \label{defn:kuperbergweaver} (\cite{kw}, Definition 2.3)
A {\it $W^*$- quantum metric} on a von Neumann algebra $\M \subseteq B(\H)$ is a one-parameter family of weak* closed operator systems $\V_t \subseteq B(\H)$, $t \in [0,\infty)$ such that

\begin{enumerate}
\item $\V_s \V_t \subseteq \V_{s+t}$ for all $s,t \geq 0$
\item $\V_t = \cap_{s>t}\V_s$ for all $t \geq 0$
\item $\V_0 = \M'$ where $\M'$ is the commutant of $\M$ inside $B(\H)$
\end{enumerate}

We say a {\it $W^*$-quantum metric space} is the pair $(\M, \H,\{\V_t\}_{t \geq 0})$ of a von Neumann algebra $\M \subseteq B(\H)$ together with a $W^*$-quantum metric $\{\V_t\}$.
\end{definition}

It is easy to see that the $\V_t$ are nested. It can also be seen that the first and third condition implies $\V_0$ is a von Neumann algebra.

Given a (possibly infinite) metric space $(X,d)$, we can view the classical metric space as an example of a $W^*$-quantum metric on an abelian von Neumann algebra. We take the von Neumann algebra $\M = \ell^\infty(X)$ of bounded multiplication operators on $\ell^2(X)$ and define $\{ \V_t^X \}$ by

\begin{align*}
\V_t^X &= \overline{\spn}^{wk*} \{ V_{xy} \in \B(\ell^2(X)) \mid d(x,y) \leq t \} \\
&= \{ A \in B(\ell^2(X)) \mid \langle Ae_y,e_x \rangle =0 \text{ if } d(x,y)>t \}
\end{align*}
where $V_{x,y} \in B(\ell^2(X))$ is the rank one operator $V_{xy}: g \mapsto \langle g,e_y\rangle e_x$ and $\{e_x\}_{x \in X}$ is the standard orthonormal basis on $\ell^2(X)$.

\begin{proposition}  \label{prop:kwmetric} (\cite{kw}, Proposition 2.5.)
The construction above gives us a $W^*$-quantum metric space.
\end{proposition}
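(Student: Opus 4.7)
The plan is to verify the three axioms of \Cref{defn:kuperbergweaver} one by one, using the orthogonality description
\[
\V_t^X = \{A \in B(\ell^2(X)) \mid \langle Ae_y, e_x\rangle = 0 \text{ whenever } d(x,y) > t\},
\]
which is manifestly a weak*-closed operator system: it is weak*-closed as an intersection of kernels of the weak*-continuous functionals $A \mapsto \langle Ae_y, e_x\rangle$, it is self-adjoint by the symmetry of $d$ (since $\langle A^*e_y, e_x\rangle = \overline{\langle Ae_x,e_y\rangle}$ and $d(x,y) = d(y,x)$), and it contains the identity because $\langle I e_y, e_x\rangle = \delta_{xy}$ vanishes off the diagonal. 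The equality of this description with $\overline{\spn}^{wk*}\{V_{xy} \mid d(x,y)\leq t\}$, asserted in the setup, is a routine duality/truncation argument: for a generator $V_{xy}$ with $d(x,y) \leq t$ one computes $\langle V_{xy} e_w, e_z\rangle = \delta_{zx}\delta_{wy}$, which vanishes whenever $d(z,w) > t$, giving the inclusion $\supseteq$; the reverse inclusion follows because for any $A$ in the orthogonality set the finite truncations $P_F A P_F$ (with $P_F$ the projection onto $\spn\{e_x \mid x \in F\}$, $F \subseteq X$ finite) are uniformly bounded by $\|A\|$, lie in the finite linear span of the $V_{xy}$ with $d(x,y) \leq t$, and converge weak* to $A$ along the net of finite subsets.

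For axiom (1), $\V_s^X \V_t^X \subseteq \V_{s+t}^X$, I would compute matrix entries: for $A \in \V_s^X$, $B \in \V_t^X$, and $d(x,y) > s+t$,
\[
\langle ABe_y, e_x\rangle = \sum_{z \in X} \langle A e_z, e_x\rangle \langle B e_y, e_z\rangle,
\]
and for each $z$ the triangle inequality forces at least one of $d(x,z) > s$ or $d(z,y) > t$, so at least one factor vanishes and the sum is zero.

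For axiom (2), the inclusion $\V_t^X \subseteq \bigcap_{s > t}\V_s^X$ is trivial from the nesting. Conversely, if $A$ lies in every $\V_s^X$ with $s > t$ and $d(x,y) > t$, then by the Archimedean property of $\bR$ there is some $s$ with $t < s < d(x,y)$, forcing $\langle Ae_y, e_x\rangle = 0$, hence $A \in \V_t^X$. Axiom (3) then identifies $\V_0^X$ with $\M'$: the orthogonality description at $t=0$ says $\V_0^X$ consists of those $A$ with $\langle Ae_y,e_x\rangle = 0$ for $x \neq y$, which is exactly the algebra of diagonal operators $\ell^\infty(X)$, and since $\ell^\infty(X) \subseteq B(\ell^2(X))$ is a MASA, $\M' = \M = \V_0^X$.

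The only mildly delicate step I anticipate is the weak*-density argument reconciling the two descriptions of $\V_t^X$ when $X$ is infinite, as one must invoke the net (rather than a sequence) of finite truncations; boundedness of $A$ together with the fact that the rank-one operators $V_{xy}$ have pairwise orthogonal ranges and supports is what makes the truncation argument converge. Everything else is a direct consequence of the triangle inequality, symmetry of $d$, the identification $d(x,y)=0 \iff x=y$, and the MASA property of $\ell^\infty(X)$.
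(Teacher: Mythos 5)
Your proof is correct. The paper itself offers no argument for this proposition --- it is quoted directly from \cite{kw} (Proposition 2.5) --- and your verification (weak*-closedness via the matrix-entry functionals $A\mapsto\langle Ae_y,e_x\rangle$, the triangle inequality for $\V_s\V_t\subseteq\V_{s+t}$, the Archimedean argument for $\V_t=\bigcap_{s>t}\V_s$, and the MASA property of $\ell^\infty(X)$ giving $\V_0=\M'=\M$) is precisely the standard one, with the truncation argument along the net of finite subsets correctly handling the reconciliation of the two descriptions of $\V_t^X$ in the infinite case.
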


Conversely, if we have a $W^*$-quantum metric $\{\V_t \}$ on the commutative von Neumann algebra $\M = \ell^\infty(X)$, then we may set

\[ d(x,y) = \inf\{ t \mid \langle A e_y, e_x \rangle \neq 0 \text{ for some } A \in \V_t \} \]

to obtain a metric on $X$. Thus, we have obtained a correspondence between $W^*$-quantum metrics on abelian von Neumann algebras and classical metric spaces.

To motivate \Cref{defn:kuperbergweaver}, given a classical metric space $(X,d)$ we may look at the family of relations given by $R_t = \{ (x,y) \in X \times X \mid d(x,y) \leq t \}$. There is then the following correspondence between a classical metric space, this family of relations, and any quantum metric space as defined above:

\begin{align*}
d(x,x)&=0 &\leftrightarrow&& R_0 \text{ is the }&\text{diagonal relation} &\leftrightarrow&&  I &\in \M' = \V_0 \\
d(x,y) &= d(y,x) &\leftrightarrow&& R_t &= R_t^T &\leftrightarrow&& \V_t &= \V_t^* \\
d(x,z) &\leq d(x,y) + d(y,z) &\leftrightarrow&& R_s R_t &\subseteq R_{s+t} &\leftrightarrow&& \V_s\V_t &\subseteq \V_{s+t} 
\end{align*}

where $R_t^T$ denotes the transpose of the relation, that is, $(x,y) \in R_t$ if and only if $(y,x) \in R_t^T$. We can also note here that the relations $R_t$ are nested, as are the $\V_t$. From a family of relations $\{R_t\}_{t \geq 0}$ with the properties above, we can see that the relations define a unique metric $d(x,y) = \inf \{t \mid (x,y) \in R_t \}$.

\begin{example}
One can obtain a $W^*$-quantum metric from a classical graph $G = (V(G), E(G))$. If $|V(G)| = n$ then we may equip the space $V(G)$ with the shortest path metric coming from the graph. We then set

\[ \V_1 = \spn \{ E_{ij} \mid i=j \text{ or } i \text{ is adjacent to } j \} \subseteq M_n(\bC) = B(\ell^2(V(G))) \]

where $E_{ij} \in M_n(\bC)$ is the matrix of all zeros with a one in the $(i,j)$ entry. We can then set the larger sets to be

\[ \V_k = \V_1^k = \spn \{ A_1\ldots A_k \mid A_1,\ldots, A_k \in \V_1 \} \subseteq M_n(\bC). \]

It is not hard to check that this gives us a $W^*$-quantum metric on the von Neumann algebra $M_n(\bC) = B(\ell^2(V(G)))$.
\end{example}

A similar argument holds for the class of quantum graphs, an operator space generalization of classical graphs. We first define a quantum graph.

\begin{definition} \label{defn:qgraph}
A {\it quantum graph}, as defined in \cite{weaver,weaver2}, is a triple $(\S, \M, M_n)$ where $\M$ is a non-degenerate von Neumann algebra and $\M \subseteq M_n$, $\S \subseteq M_n(\bC)$ is an operator system and $\S$ is an $\M' - \M'$-bimodule with respect to matrix multiplication.
\end{definition}

\begin{remark}
An equivalent definition of a quantum graph has been defined in \cite{musto1}: a {\it quantum set} is a pair $X = (\B, \psi_X)$ where $\B$ is a finite dimensional $C^*$-algebra and $\psi_X : \B \rightarrow \bC$ is a faithful trace equipped with the multiplication map $m_X : \B \otimes \B \rightarrow \B$ and unit map $\eta_X : \bC \rightarrow \B$.

For $\delta > 0$, we call the state $\psi_X$ a {\it $\delta$-form} if $m_X m_X^* = \delta^2 \id$ where the adjoint is taken with respect to the Hilbert space structure on $\B$ coming from the GNS construction with respect to $\psi_X$. We label this Hilbert space $L^2(X)$.

We may then equip the quantum set with a linear map $A_X : L^2(X) \rightarrow L^2(X)$ which satisfies 

\begin{enumerate}
\item $m_X(A_X \otimes A_X) m_X^* = \delta^2 A_X$
\item $(\id \otimes \eta_X^* m_X) (\id \otimes A_X \otimes \id)(m_X^* \eta_X \otimes \id) = A_X$
\item $m_X (A_X \otimes \id) m_X^* = \delta^2 \id$
\end{enumerate}

We call such a triple $X = (\B, \psi_X, A_X)$ a {\it quantum graph}.

To see the connection between these two definitions, we fix the tracial $\delta$-form $\psi$ and we can set $\M = \B \subseteq B(L^2(X))$. We then set the $\M'-\M'$ bimodule $\S$ to be $P(B(L^2(X))$ where $P$ is the projection mapping the operator $T \in B(L^2(X))$ to $\delta^{-2} m_X (A_X \otimes T) m_X^*$. However, the relation between the two definitions is not one-to-one, as two distinct quantum graphs $(\B,\psi_X,A_X)$ in the sense of \cite{musto1} can yield the same $\M'-\M'$-bimodule $\S$.
\end{remark}

\begin{example} \label{ex:qgraph}
Our goal is to obtain a $W^*$-quantum metric from a quantum graph $X = (\S,\M,M_n)$. We set $\V_0 = \M'$, and if we assume $\S$ is non-reflexive, then $\V_1 = \S$ is orthogonal to $\M'$, that is, $\V_1 \subseteq \M'^\perp$.

Once we have $\V_1$, then we may set $\V_k = \V_1^k$ as before.

This connects to the quantum adjacency matrix definition: for operators $T$, 
we may consider the compression $m(A_X \otimes T)m^*$ which classically corresponds to the Schur multiplication $A_X \cdot T$. When we think of Schur multiplication by the adjacency matrix, it produces $\V_1$, that is, it kills all matrix units that aren't adjacent.
\end{example}

\section{Quantum isometry group of $W^*$-quantum metric spaces} \label{sec:isometrygroup}

Symmetries of a structure are viewed as transformations which preserve the relevant properties of that structure, while quantum symmetries are the non-commutative analogue of symmetries. The aim of this section is to generalize Banica's construction of the quantum isometry group for classical metric spaces to the class of $W^*$-quantum metric spaces. We define the quantum isometry group of the $W^*$-quantum metric spaces, answering the question which has been asked in \cite{goswami1}.

We first recall Banica's quantum isometry group for finite metric spaces.

\begin{definition} \label{defn:banica}(\cite{banica})
The {\it quantum isometry group} of a finite metric space $(X,d)$ is defined to be $\A = C(G^+(X,d))$ where $G^+(X,d) = (\A,\Delta)$ is the quotient of $C(S_n^+)$ by the ideal generated by the relations $UD = DU$, where $D = [d(x,y)]_{x,y \in X}$ is the distance matrix. That is,

\[ C(G^+(X,d)) = C(S_n^+) / \langle UD=DU \rangle. \]

Comultiplication is given by $\Delta : \A \rightarrow \A \otimes \A$ which maps $u_{ij} \mapsto \sum_k u_{ik} \otimes u_{kj}$.
\end{definition}

\subsection{Actions of a quantum group on a $W^*$-quantum metric}

For a compact quantum group $G$, we denote $C_r(G)$ to be the corresponding reduced $C^*$-algebra, that is, the image of $C(G)$ under the GNS representation $\pi_h: C(G) \rightarrow B(L^2(G))$. We equip $C_r(G)$ with a comultiplication $\Delta$. We denote by $L^\infty(G)$ to be the von Neumann algebra generated by $C_r(G)$ in $B(L^2(G))$ and the extension of the corresponding comultiplication $L^\infty \rightarrow L^\infty(G) \overline{\otimes} L^\infty(G)$ will be denoted $\Delta_G$. An interested reader can see \cite{kustermansvaes, kustermans, NeshveyevTuset} for more information.

\begin{definition}
Given a compact quantum group $G$ and a von Neumann algebra $\M$, an {\it action of $G$ on $\M$} is a normal injective unital $*$-homomorphism $\alpha : \M \rightarrow \M \overline{\otimes} L^\infty(G)$ that satisfies the {\it action equation}

\[ (\alpha \otimes \id_\M) \circ \alpha = (\id_{L^\infty(G)} \otimes \Delta_G) \circ \alpha. \]
\end{definition}

\begin{definition} \label{def:invariantstate}
Let the compact quantum group $G$ act on a von Neumann algebra $\M$ by $\alpha: \M \rightarrow \M \overline{\otimes} L^\infty(G)$. We call a state $\psi$ {\it $\alpha$-invariant} if $(\id \otimes \psi)\alpha = \psi(\cdot) 1$.
\end{definition}

It's known that we are guaranteed to have an invariant state in certain circumstances.

\begin{proposition} \label{thrm:invariantstate}
Consider a von Neumann algebra $\M$ with a faithful state $\phi$ and a compact quantum group $G$. If the compact quantum group $G$ acts on $\M$ by $\alpha : \M \rightarrow \M \overline{\otimes} L^\infty(G)$, then there exists a (not necessarily unique) $\alpha$-invariant state $\psi$ with $(\psi \otimes 1) \alpha(x) = \psi(x)1$.
\end{proposition}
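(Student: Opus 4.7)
The strategy is to average $\phi$ against the Haar state of $G$. Let $h : L^\infty(G) \to \bC$ denote the Haar state, which is a faithful normal bi-invariant state guaranteed by the compact quantum group structure, and define the averaging map
\[ E := (\id_\M \otimes h) \circ \alpha : \M \longrightarrow \M. \]
Since $\alpha$ is a normal unital $*$-homomorphism and $h$ is a normal state, $E$ is a normal unital completely positive map with $E(1) = 1$. The candidate invariant state will be $\psi := \phi \circ E$, which is automatically a state on $\M$; note that faithfulness of $\phi$ is not needed for this step.

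The key intermediate claim is that $E$ takes values in the fixed-point algebra $\M^\alpha := \{ m \in \M \mid \alpha(m) = m \otimes 1 \}$. To see this, apply $\id_\M \otimes \id_{L^\infty(G)} \otimes h$ to both sides of the action equation $(\alpha \otimes \id) \alpha(x) = (\id \otimes \Delta_G)\alpha(x)$: the left-hand side becomes $\alpha(E(x))$, while the right-hand side, by right invariance $(\id \otimes h)\Delta_G = h(\cdot)\,1$, collapses to $E(x) \otimes 1$. Hence $\alpha(E(x)) = E(x) \otimes 1$, as claimed.

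To verify $\alpha$-invariance of $\psi$, I rewrite
\[ (\psi \otimes \id) \alpha(x) = (\phi \otimes \id)\bigl((E \otimes \id)\alpha(x)\bigr), \]
and carry out a parallel tensor calculation: substituting the action equation and then invoking the left invariance $(h \otimes \id)\Delta_G = h(\cdot)\,1$ gives $(E \otimes \id)\alpha(x) = E(x) \otimes 1$. Therefore $(\psi \otimes \id)\alpha(x) = \phi(E(x)) \cdot 1 = \psi(x) \cdot 1$, which is the desired invariance.

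This is the classical ``averaging against the Haar state'' construction, so no serious obstacle is expected. The only care needed is keeping track of which leg of the triple tensor product the Haar state acts on in each of the two computations, and invoking the correct side of bi-invariance each time. The faithfulness hypothesis on $\phi$ is not actually used in this argument; it presumably appears so that $\M$ is $\sigma$-finite, which is the natural setting in which normality of the coaction $\alpha$ is well-behaved.
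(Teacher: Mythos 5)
Your proof is correct and follows essentially the same route as the paper, which simply defines $\psi(x) = (\phi \otimes h)\alpha(x)$ (the same state as your $\phi \circ E$) and cites the averaging-against-the-Haar-state argument; you have merely filled in the invariance computation explicitly and correctly. Your side remark that faithfulness of $\phi$ is not needed for existence is also accurate --- the paper invokes it only to conclude that $\psi$ is moreover faithful, a property used later but not claimed in the statement.
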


The proof follows by letting $\psi (x) = (\phi \otimes h) \alpha(x)$ where $h$ is the Haar measure and one can view this as an average relative to the action $G \curvearrowright^\alpha \M$. Since $\phi$ is faithful, one can show $\psi$ is also faithful.

To motivate the next definition, we note that given a representation $U \in M(K(\H) \otimes C(G))$, we automatically get an action

\begin{equation} \label{eqn:alpha}
\begin{aligned}
\alpha: B(\H) &\rightarrow B(\H) \overline{\otimes} L^\infty(G) \\
T &\mapsto U^*(T \otimes 1) U.
\end{aligned}
\end{equation}

\begin{definition} \label{defn:unitarilyimplemented}
We begin with a $W^*$-quantum metric $(\V_t)$ on the von Neumann algebra $\M \subseteq B(\H)$. Let $G$ be a compact quantum group. Given a von Neumann algebraic action

\[ \alpha: \M \rightarrow \M \overline{\otimes} L^\infty(G) \]

we say the action is {\it unitarily implemented} if there exits a unitary representation $U \in M(K(\H) \otimes C_r(G))$ such that

\[ \alpha(x) = U^* (x\otimes 1)U \qquad x \in \M. \]
\end{definition}

It was shown in \cite{vaes1} that there always exists a unitarily implemented action from $\alpha: \M \rightarrow \M \overline{\otimes} L^\infty(G)$ for free if we have an invariant state.  By Prop \ref{thrm:invariantstate}, if we have the action $\alpha$ and fix any faithful state then we can obtain a faithful $\alpha$-invariant state. With respect to the GNS Hilbert space for $\M$, we can realize this as unitarily implemented by averaging the state with the Haar measure.

We may then naturally extend such a unitarily implemented action given by Equation \Cref{eqn:alpha} to an action on $B(\H)$.

\begin{definition} \label{defn:CQGacting}
Given a $W^*$-quantum metric space $(\M, \H, \V_t)$ with $\M \subseteq B(\H)$ and a compact quantum group $G$, we say that $G$ {\it acts} on the $W^*$-quantum metric space if there is a unitarily implemented, von Neumann algebraic action $\alpha: \M \rightarrow \M \otimes L^\infty(G)$ such that

\[ \alpha(\V_t) \subseteq \V_t \overline{\otimes}^{w*} L^\infty(G) \qquad \forall t \geq 0 \]
\end{definition}

\begin{definition} \label{defn:universal}
Given a possibly infinite dimensional $W^*$-quantum metric space $(\M,\H,\V_t)$, we say the universal compact quantum group $C(G^\V)$ acting on the quantum metric space which satisfies the following:

\begin{enumerate}
\item it is generated by a fundamental representation $\mathbb{U} \in M(K(\H) \otimes C^u(G))$ where $C^u(G)$ is the universal $C^*$-algebra associated to $G$

\item for any compact quantum group $C(G)$ acting on $(\M,\H,\V_t)$ with unitary representation $U \in M(K(\H)\otimes C_r(G))$, there exists a surjection $C^u(G^\V) \rightarrow C^u(G)$ which maps $\mathbb{U} \mapsto U$.
\end{enumerate}

We define the {\it quantum isometry group of the $W^*$-quantum metric space}, $G^{\V}$, to be the universal compact quantum group (if such a universal object exists) acting on the quantum metric space as in \Cref{defn:CQGacting}.
\end{definition}

It's not clear whether such a universal object exists in general. However, it can be shown that one exists in the finite dimensional case, which leads us into our next section.

\subsection{Finite dimensional case}

It is known that the quantum metrics do not depend on the choice of Hilbert space on which $\M$ is represented, and this result will be crucial to us.

\begin{theorem} \label{thrm:correspondence} (\cite{kw}, Theorem 2.4.)
Let $\H_1$ and $\H_2$ be Hilbert spaces and let $\M_1 \subseteq B(\H_1)$ and $\M_2 \subseteq B(\H_2)$ be isomorphic von Neumann algebras. Then any isomorphism induces an order preserving 1-1 correspondence between the quantum metrics on $\M_1$ and $\M_2$.
\end{theorem}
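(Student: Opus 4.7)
The plan is to decompose an arbitrary isomorphism $\pi: \M_1 \rightarrow \M_2$ into two elementary operations that are individually easy to analyze, namely (a) spatial isomorphism and (b) amplification by an auxiliary Hilbert space. By standard von Neumann algebra theory, any two faithful normal representations of the same von Neumann algebra are quasi-equivalent, and hence after amplifying each by a suitable Hilbert space there exists a unitary $\widetilde{U}: \H_1 \otimes \K_1 \rightarrow \H_2 \otimes \K_2$ intertwining the amplifications $\M_i \otimes \bC 1 \subseteq B(\H_i \otimes \K_i)$. So it suffices to verify that quantum metrics correspond bijectively and order-preservingly in each of the two elementary cases, and then to compose the resulting correspondences.

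The spatial isomorphism case is immediate: if $U: \H_1 \rightarrow \H_2$ is a unitary with $U \M_1 U^* = \M_2$, then $\{\V_t\} \mapsto \{U \V_t U^*\}$ sends quantum metrics on $\M_1$ to quantum metrics on $\M_2$, since conjugation by $U$ is a normal $*$-isomorphism of $B(\H_1)$ onto $B(\H_2)$ and therefore preserves products, intersections, weak-$*$ closed operator systems, and commutants. The correspondence is visibly order-preserving and inverse-to-itself under passage to $U^*$.

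The central step is the amplification case. For a fixed representation $\M \subseteq B(\H)$ and auxiliary Hilbert space $\K$, I would establish a bijection
\[ \{\V_t\}_{t \geq 0} \; \longleftrightarrow \; \{\widetilde{\V}_t\}_{t \geq 0} := \{\V_t \, \overline{\otimes} \, B(\K)\}_{t \geq 0} \]
between quantum metrics on $\M$ and quantum metrics on $\M \otimes 1 \subseteq B(\H \otimes \K)$. The forward direction is a direct verification: $(\M \otimes 1)' = \M' \, \overline{\otimes} \, B(\K)$ handles the commutant axiom, while the multiplicative and intersection axioms follow from elementary properties of the normal spatial tensor product. For the converse, given a quantum metric $\{\widetilde{\V}_t\}$ on the amplification, the inclusion $\M' \, \overline{\otimes} \, B(\K) = \widetilde{\V}_0 \subseteq \widetilde{\V}_t$ forces $\widetilde{\V}_t$ to be a weak-$*$ closed $1 \, \overline{\otimes} \, B(\K)$-bimodule in $B(\H \otimes \K)$. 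I would then invoke the standard bimodule/slice-map decomposition to write $\widetilde{\V}_t = \V_t \, \overline{\otimes} \, B(\K)$ for the uniquely determined weak-$*$ closed subspace
\[ \V_t = \overline{\spn}^{w^*}\{(\id \otimes \omega)(\widetilde{\V}_t) \mid \omega \in B(\K)_*\} \subseteq B(\H), \]
and verify that $\{\V_t\}$ satisfies the three quantum-metric axioms by pulling each axiom back through the injective tensoring $\V \mapsto \V \, \overline{\otimes} \, B(\K)$.

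Composing the two reductions---amplify a quantum metric $\V_t$ on $\M_1$ to $\V_t \, \overline{\otimes} \, B(\K_1)$, transport via $\widetilde{U}$ to a quantum metric on the amplification of $\M_2$, and descend via the inverse of the amplification correspondence---produces the desired bijection between quantum metrics on $\M_1$ and $\M_2$, order-preserving at every stage. The main obstacle will be the bimodule classification underlying the amplification step: confirming that every weak-$*$ closed $1 \, \overline{\otimes} \, B(\K)$-bimodule of $B(\H \otimes \K)$ splits as a spatial tensor product $V \, \overline{\otimes} \, B(\K)$. This is classical, but it must be applied to weak-$*$ closed operator systems rather than to closed $*$-subalgebras, and the slice-map construction above is the cleanest route; the $B(\K)$-bimodule property is precisely what ensures the span on the right recovers all of $\widetilde{\V}_t$ rather than a proper subspace.
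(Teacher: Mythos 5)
The paper itself offers no proof of this statement---it is imported verbatim from Kuperberg and Weaver (\cite{kw}, Theorem 2.4)---so there is no internal argument to compare against; your proposal has to be judged against the cited source. On that basis it is correct and very close to the standard proof. The usual route decomposes an arbitrary isomorphism $\M_1 \to \M_2$ into the three elementary types of the Dixmier structure theorem (amplification, induction by a projection in the commutant with central support $1$, and a spatial isomorphism) and verifies the correspondence for each; you instead amplify \emph{both} representations and use quasi-equivalence of faithful normal representations to absorb the induction step into a single intertwining unitary, which is a legitimate streamlining that leaves only the amplification and spatial cases to check. The two load-bearing facts you isolate are the right ones and are both classical: the slice-map (Fubini) characterization $\V \overline{\otimes} B(\K) = \{T : (\id \otimes \omega)(T) \in \V \text{ for all } \omega \in B(\K)_*\}$, which you should note is also what makes the intersection axiom $\bigcap_{s>t}(\V_s \overline{\otimes} B(\K)) = \V_t \overline{\otimes} B(\K)$ go through in the forward direction (it is not quite ``elementary properties of the tensor product''), and the splitting of weak$^*$-closed $1 \otimes B(\K)$-bimodules as $\V \overline{\otimes} B(\K)$, which one proves by compressing with the matrix units of $B(\K)$ and taking weak$^*$ limits of finite corners. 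Your observation that the bimodule property of $\widetilde{\V}_t$ comes for free from $1 \otimes B(\K) \subseteq (\M \otimes 1)' = \widetilde{\V}_0$ together with $\widetilde{\V}_0 \widetilde{\V}_t \widetilde{\V}_0 \subseteq \widetilde{\V}_t$ is exactly the point that makes the splitting applicable to operator systems rather than subalgebras. I see no gaps beyond these details to be written out.
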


Therefore, for the following definition we may assume that we're representing 
$\V_t \subseteq B(L^2(\M))$ in the regular representation of $\M$.

\begin{definition} \label{defn:fdquantumautomorphism}
Let $\M \subseteq B(\H)$ be a finite dimensional von Neumann algebra with its canonical trace fixed. The {\it quantum automorphism group of $\M$}, denoted $G_{aut}$, is the universal compact quantum group with the following properties:

\begin{enumerate}
\item $C(G_{aut})$ is generated by the entries of a representation $U \in B(\H) \otimes C(G_{aut})$

\item By identifying $\H = \M$ as vector spaces, then we define the trace-preserving unital $*$-homomorphism $\delta$ on $\M$ as

\begin{equation} \label{eqn:delta}
\begin{aligned}
\delta : \M &\rightarrow \M \otimes C(G_{aut}) \\
e_j &\mapsto \sum_k e_k \otimes u_{kj}
\end{aligned}
\end{equation}

where $U = [u_{ij}]$ is the fundamental unitary representation.
\end{enumerate}

This $U$ is a unitary representation of the compact quantum group using the natural comultiplication $\Delta(u_{ij}) = \sum_{k=1}^n u_{ik} \otimes u_{kj}$ if and only if $\delta$ as defined above is an action.
\end{definition}

Since we assume that $\delta$ is a trace-preserving unital $*$-homomorphism, \cite{banica} shows that $U$ is automatically unitary.

Now, for any action $\alpha: \M \to \M \otimes C(G)$ that preserves the canonical trace, it was shown in \cite{vaes1} that for the Hilbert space $\H = L^2(\M)$, there always exists a unitary representation $V \in B(\H) \otimes C(G)$ that implements $\alpha$ via 
$\alpha(T) = V^*(T \otimes 1)V$ for each $T \in \M$. 

\begin{definition} \label{defn:fdquantumisometry}
Let $\M$ be a finite dimensional von Neumann algebra. Fix a $W^*$-quantum metric $(\V_t)_{t \geq 0}$, where by \Cref{thrm:correspondence} we may assume that the metric space is represented on the GNS Hilbert space, $(\V_t) \subseteq B(L^2(\M))$.

We define the {\it quantum isometry group} $C(G^\V)$ to be the quantum subgroup of $G_{aut}$ generated by $u_{ij}$ where the map $\delta$ in \Cref{eqn:delta} is a $\psi$-preserving $*$-homomorphism and the conjugation action $\alpha_\V$ given by 

\begin{equation} \label{eqn:alphaV}
\begin{aligned}
\alpha_\V: B(L^2(\M)) &\rightarrow B(L^2(\M)) \otimes C(G^\V) \\
T &\mapsto U(T \otimes 1) U^*
\end{aligned}
\end{equation}

leaves the $\V_t$ invariant, i.e. $\alpha_\V (\V_t) \subseteq \V_t \otimes C(G^\V)$ for all $t$.
\end{definition}

Here, $G^\V$ will be of Kac type. Indeed, $G^\V$ is a quantum subgroup of $G_{aut}$, the quantum automorphism group of a tracial von Neumann algebra, which is known to be of Kac type.

\begin{remark}
We may begin without any assumptions on the map $\alpha_\V$ given by equation \Cref{eqn:alphaV}, and consider only the action $\delta$ of $G$ on $\M$ as in equation \Cref{eqn:delta}. Then by the fact that $\delta$ is automatically unitarily implemented by equation \Cref{eqn:alphaV}, one can prove that $\V_0 = \M'$ is always preserved by the conjugation action $\alpha_\V$. That is, the condition that the conjugation action $\alpha_\V$ leaves $\V_t$ invariant at $t=0$ comes for free.
\end{remark}

\begin{remark}
We can show that the invariance in equation \Cref{eqn:alphaV} of the definition holds for both $\M'$ and $\M$. We know that $\M = L^2(\M) = \M' = J\M J$, where $J$ is modular conjugation.  At the level of the Hilbert space $\H$, the map $JTJ$ is just $T \mapsto T^*$.  From the definition of the action, we get that $\alpha_\V(\M) \subseteq \M \otimes B(\H)$.

Moreover, by noting that $\alpha_\V(Je_kJ) = \alpha(e_k^*) = \alpha_\V(e_k)^*$, then $\alpha_\V(\M') \subseteq \M' \otimes B(\H)$.  This shows that in equation \Cref{eqn:alphaV} of \Cref{defn:fdquantumisometry} at $t=0$, we still have the appropriate properties but the restriction of the map $T \mapsto U(T \otimes 1)U^*$ to $\V_{0} = \M'$ is not a *-homomorphism, but instead an anti-homomorphism.

In the case of classical metric spaces, $\M$ is commutative and thus we get that homomorphisms and antihomomorphisms are the same.
\end{remark}

\begin{proposition}
Let $(X,d)$ be a classical metric space, and consider the construction of the $W^*$-quantum metric space as in \Cref{prop:kwmetric}, with $\M = \ell^\infty(X)$ and $\H = \ell^2(X)$.

Then the quantum isometry group of the metric space is the same as the quantum isometry group of the corresponding $W^*$-quantum metric space, that is, $G^\V \cong G^+(X,d)$.
\end{proposition}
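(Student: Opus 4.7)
The plan is to work within the finite-dimensional framework of \Cref{defn:fdquantumisometry}, realizing $G^\V$ as a quantum subgroup of $G_{aut}(\ell^\infty(X))$. By Wang's theorem, $G_{aut}(\ell^\infty(X)) \cong S_n^+$ with $n = |X|$, whose fundamental magic unitary $U = [u_{ij}]_{i,j \in X}$ acts on $\ell^2(X) = L^2(\ell^\infty(X))$ via $U(e_j \otimes 1) = \sum_k e_k \otimes u_{kj}$. Since $\ell^\infty(X) \subseteq B(\ell^2(X))$ is already in its standard representation, the conjugation action $\alpha_\V(T) = U(T \otimes 1)U^*$ can be computed directly on the rank-one basis $V_{xy} = |e_x\rangle\langle e_y|$ that spans the operator systems $\V_t$.

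First I would verify by direct computation the formula
\[
\alpha_\V(V_{xy}) = \sum_{k,k' \in X} V_{kk'} \otimes u_{kx}u_{k'y},
\]
which is consistent with $\delta(V_{xx}) = \sum_k V_{kk} \otimes u_{kx}$ on diagonal entries via column orthogonality $u_{kx}u_{k'x} = \delta_{kk'}u_{kx}$. Because $\{V_{kk'}\}_{k,k' \in X}$ is a linear basis of $B(\ell^2(X))$ and $\V_t = \spn\{V_{kk'} : d(k,k') \leq t\}$, the invariance requirement $\alpha_\V(\V_t) \subseteq \V_t \otimes C(G^\V)$ for every $t \geq 0$ is equivalent to the family of vanishing relations
\[
u_{kx}u_{k'y} = 0 \qquad \text{whenever } d(k,k') > d(x,y).
\]

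The next step is to upgrade these one-sided relations to the symmetric condition $u_{kx}u_{k'y} = 0$ whenever $d(k,k') \neq d(x,y)$. Here I would use that $G^\V$ is of Kac type as a quantum subgroup of $S_n^+$, so its antipode $S$ is defined on the Hopf $*$-subalgebra generated by the $u_{ij}$ and satisfies $S(u_{ij}) = u_{ji}$. Applying $S$ to the vanishing relation and using its antihomomorphism property gives $u_{yk'}u_{xk} = 0$ under the same hypothesis; a relabelling then yields $u_{ab}u_{cd} = 0$ whenever $d(b,d) > d(a,c)$, which combined with the original relation produces the full symmetric condition.

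Finally I would invoke the standard identity for magic unitaries: for any magic unitary $U$, the relations $u_{ij}u_{kl} = 0$ whenever $d(i,k) \neq d(j,l)$ are equivalent to $UD = DU$, where $D = [d(i,j)]_{i,j}$. One direction is obtained by multiplying $UD = DU$ on the left and right by suitable $u_{ab}$ and exploiting row/column orthogonality to isolate single terms; the reverse direction uses the completeness relation $u_{kl} = \sum_j u_{ij}u_{kl}$ together with the vanishing relations to recover $UD = DU$. Combining these steps shows that the defining relations of $C(G^\V)$ coincide with those of $C(G^+(X,d))$, so the universal objects agree. The main obstacle is the antipode step: in a classical group the opposite inequality $d(k,k') < d(x,y) \Rightarrow u_{kx}u_{k'y} = 0$ would be automatic by symmetry of the invariance condition, but in the quantum setting one must extract it via the antipode, which is available precisely because $G^\V$ sits inside the Kac type group $S_n^+$.
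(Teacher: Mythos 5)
Your proposal is correct and follows essentially the same route as the paper: compute the conjugation action on the matrix units $V_{xy}$, read off from $\alpha_\V(\V_t)\subseteq \V_t\otimes C(G^\V)$ the vanishing relations $u_{kx}u_{k'y}=0$ for $d(k,k')>d(x,y)$, symmetrize them with the antipode of the Kac-type quantum group, and identify the resulting relations with $UD=DU$. The only cosmetic difference is that the paper extracts the coefficients by compressing with $V_{aa}\otimes 1$ and $V_{bb}\otimes 1$ rather than appealing directly to linear independence of the matrix units, and it asserts the containment $G^+<G^\V$ up front instead of folding it into a two-way equivalence of relations.
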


\begin{proof}
Banica's $(G^+(X,d),U)$ can be seen to satisfy the properties of $G^\V$ and so $G^+ < G^\V$.

To see the inclusion $G^\V < G^+$, by the properties defining the quantum automorphism group, we know the fundamental representation $U$ of $G^\V$ is a magic unitary.
Therefore, $G^\V < S_n^+$ where $n = |X|$. We need to check that $U(D \otimes 1)U^* = D$ follows from the invariance of $\alpha_\V$.

For $x,y \in X$ with $d(x,y) \leq t$, we know that $\alpha_\V(V_{xy}) \in \V_t^X$, and so we may write it as $\alpha_\V(V_{xy}) = \sum_{d(s,k)\leq t} V_{sk} \otimes x_{sk}$ for some $x_{sk} \in \C(G)$. By the definition of $\alpha_\V$, we see that $\alpha_\V(V_{xy}) = \sum_{s,k} V_{sk} \otimes u_{xs} u_{yk}$. Consider $a,b \in X$ with $d(a,b) > t$. We may start with

\[ 
\sum_{s,k, d(s,k) \leq t} V_{sk} \otimes x_{sk} = \sum_{s,k} V_{sk} \otimes u_{xs}u_{yk}
\]

We multiply by $(V_{aa} \otimes 1)$ on the left and by $(V_{bb} \otimes 1)$ on the right to obtain

\begin{equation} \label{eqn:work}
\sum_{s,k, d(s,k) \leq t} V_{aa}V_{sk}V_{bb} \otimes x_{sk} = \sum_{s,k} V_{aa}V_{sk}V_{bb} \otimes u_{xs} u_{yk}.
\end{equation}

We will use the fact that $V_{aa}V_{sk}V_{bb} = \delta_{s=a} \delta_{k=b} V_{ab}$. Thus, the sum on the left hand side of equation \Cref{eqn:work} equals zero since $d(s,k) = d(a,b) > t$. On the right hand side of equation \Cref{eqn:work}, the sum simply condenses down to $V_{ab} \otimes u_{xa} u_{yb}$. Therefore

\[ 0 = V_{ab} \otimes u_{xa}u_{yb}. \]

So indeed, if $d(x,y) \neq d(a,b)$ then $V_{ax}V_{by} = 0$ implying

\[ 0 = (V_{ax}V_{by})^* = V_{by}^*V_{ax}^* = V_{by}V_{ax}.\]

Similarly, by using the antipode or considering an analogous $\beta$-action, it can be shown that $V_{xa}V_{yb} = 0 = V_{yb}V_{xa}$.

We now claim that this is equivalent to $U(D\otimes 1) U^*= D$. Indeed, the $(i,j)$ entry of $U(D \otimes 1)U^*$ can be calculated as follows:

\begin{align*}
[U (D\otimes 1)U^*]_{ij} &= \sum_{k,s} d(k,s) u_{ik} u_{js} \\
&= \sum_{k,s, d(i,j)=d(k,s)} d(i,j) u_{ik} u_{js} \\
&= d(i,j) \sum_{k, s, d(i,j) = d(k,s)} u_{ik}u_{js} \\
&= d(i,j) \sum_{k,s} u_{ik}u_{js} \\
&= d(i,j) \left( \sum_k u_{ik} \right) \left( \sum_j u_{js} \right) \\
&= d(i,j) \cdot 1 = d(i,j)
\end{align*}

So then $G^{\V} < G^+$.
\end{proof}

\begin{remark}
Extensions of Banica's quantum isometry group on classical metric spaces were studied in \cite{quaegebeur}, where quantum isometry groups of quantum metric spaces in the framework of Rieffel is studied. Although both our definition and theirs agrees with Banica's definition in the classical sense, it would be interesting to further investigate the connection between the two extensions.
\end{remark}

\section{Quantum Isometries between Quantum Metric Spaces} \label{sec:qisoqmetrics}

In this section, we explore the quantum isometries between two $W^*$-quantum metric spaces. We restrict our attention in this section to the finite dimensional case, where interesting universal algebras are guaranteed to exist. This section uses techniques from \cite{ours}.

\begin{definition} \label{defn:qmetricsqisom}
Consider two finite dimensional quantum metric spaces $(\M_1, \H_1, \V_t)$ and $(\M_2, \H_2, \W_t)$ where the canonical traces on $\M_1$ and $\M_2$ are fixed and $\{e_j\}$ and $\{f_k\}$ are orthonormal bases for $\M_1$ and $\M_2$. Moreover, by \Cref{thrm:correspondence} we may assume $\H_i = L^2(\M_i)$.

We define $C(G^{\V,\W})$ to be the universal $C^*$-algebra generated by the coefficients of a unitary $P = [p_{ij}] \in C(G^{\V,\W}) \otimes B(\H_1, \H_2)$ with relations giving a unital $*$-homomorphism

\begin{align*}
\delta_{\V,\W}: \M_1 &\rightarrow \M_2 \otimes C(G^{\V,\W}) \\
e_j &\mapsto \sum_k f_k \otimes p_{kj}
\end{align*}

and ensuring the conjugation map given by

\begin{align*}
\alpha_{\V,\W} : B(\H_1) &\rightarrow B(\H_2) \otimes C(G^{\V,\W}) \\
T &\mapsto P(T\otimes 1)P^*
\end{align*}

satisfies $\alpha_{\V,\W}(\V_t) \subseteq \W_t \otimes C(G^{\V,\W})$.
\end{definition}

This definition satisfies two crucial criteria:

\begin{enumerate}
\item $G^{\V,\V} = G^\V$, as desired  

\item For classical metric spaces $(X,d_X), (Y,d_Y)$ and their corresponding $W^*$-quantum metric spaces $(\ell^2(X), \V_t)$ and $(\ell^2(Y), \W_t)$, we have $C(G^{\V,\W}) = \A(Isom(X,Y))$ 
\end{enumerate}

\begin{proposition}
Given two quantum graphs $X_1 = (\B_1, \psi_1, A_1)$ and $X_2 = (\B_2,\psi_2,A_2)$, we let $(\M,\V_t)$ and $(\N,\W_t)$ be the associated $W^*$-metric spaces.

If the two quantum graphs are quantum isometric, then so are their associated $W^*$-metric spaces.
\end{proposition}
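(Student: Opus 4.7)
The plan is to show that the data of a quantum isomorphism between the quantum graphs $X_1$ and $X_2$ in the sense of \cite{ours} satisfies the defining universal property of $C(G^{\V,\W})$ from \Cref{defn:qmetricsqisom}. By hypothesis, there is a non-zero unital $*$-algebra $\A$ equipped with a unitary $P \in B(L^2(X_1), L^2(X_2)) \otimes \A$ that implements a trace-preserving unital $*$-homomorphism $\delta : \B_1 \to \B_2 \otimes \A$ and intertwines the quantum adjacency matrices, $(A_2 \otimes 1)\, \delta = \delta \circ A_1$. Setting $\alpha_{\V,\W}(T) = P(T \otimes 1) P^*$, it remains to verify that $\alpha_{\V,\W}(\V_t) \subseteq \W_t \otimes \A$ for every $t \geq 0$, as the $\delta_{\V,\W}$-hypothesis of \Cref{defn:qmetricsqisom} is then automatic from $\delta$ itself.

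For $t = 0$, since $\V_0 = \B_1'$ and $\delta$ is a $*$-homomorphism implemented by $P$, a direct application of the modular-conjugation argument from the remark following \Cref{defn:fdquantumisometry} gives $\alpha_{\V,\W}(\B_1') \subseteq \B_2' \otimes \A$, i.e.\ $\alpha_{\V,\W}(\V_0) \subseteq \W_0 \otimes \A$.

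The main step is $t = 1$. By \Cref{ex:qgraph}, $\V_1$ equals the image of the Schur-multiplication map $T \mapsto \delta^{-2} m_{X_1}(A_1 \otimes T) m_{X_1}^*$ on $B(L^2(X_1))$, and likewise for $\W_1$. The $*$-homomorphism together with the $\psi$-preservation properties of $\delta$ guarantee that $P$ intertwines the multiplications $m_{X_1}, m_{X_2}$ and, by taking adjoints and using that each $\psi_i$ is a $\delta$-form, also the comultiplications $m_{X_i}^*$. Combined with $(A_2 \otimes 1)P = P(A_1 \otimes 1)$, a diagrammatic manipulation yields
\[ \alpha_{\V,\W}\bigl(\delta^{-2} m_{X_1}(A_1 \otimes T) m_{X_1}^*\bigr) = \delta^{-2} m_{X_2}\bigl(A_2 \otimes \alpha_{\V,\W}(T)\bigr) m_{X_2}^*, \]
which places the left-hand side in $\W_1 \otimes \A$. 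This string-diagram bookkeeping, which threads the intertwining relations through the Schur-multiplication formula while tracking the $\A$-coefficients, is the main obstacle of the proof.

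For integer $k \geq 2$, I would use $\V_k = \V_1^k$ together with multiplicativity of $\alpha_{\V,\W}$ (conjugation by a unitary is a $*$-homomorphism on $B(L^2(X_1))$) to conclude
\[ \alpha_{\V,\W}(\V_1^k) = \alpha_{\V,\W}(\V_1)^k \subseteq (\W_1 \otimes \A)^k \subseteq \W_k \otimes \A. \]
General $t \geq 0$ then follows from $\V_t = \bigcap_{s>t}\V_s$ and weak-$*$ continuity of conjugation by $P$. Setting $\delta_{\V,\W} := \delta$ and invoking these invariance properties, the universal property of $C(G^{\V,\W})$ produces a surjection $C(G^{\V,\W}) \twoheadrightarrow \A$. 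Since $\A \neq 0$ we conclude $C(G^{\V,\W}) \neq 0$, so the associated $W^*$-metric spaces are quantum isometric.
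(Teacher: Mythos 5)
Your proposal is correct and follows essentially the same route as the paper's own (much terser) proof: both take the unitary $P$ witnessing the quantum isomorphism, form the conjugation map $T \mapsto P(T\otimes 1)P^*$, establish $\alpha(\V_1) \subseteq \W_1 \otimes \A$ via the intertwining of $m_{X_i}$, $m_{X_i}^*$ and the quantum adjacency matrices with the Schur-multiplication description of $\V_1$ from \Cref{ex:qgraph}, and then deduce the containment for $\V_k = \V_1^k$ by multiplicativity. Your write-up is in fact more complete than the paper's, which omits the $t=0$ and general-$t$ cases and leaves the key $t=1$ computation as ``one can show.''
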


\begin{proof}
If the quantum graphs $X_1$ and $X_2$ are quantum isomorphic, then there exists some $C^*$-algebra $\C$ and some unitary $P \in \C \otimes B(L^2(X_1), L^2(X_2))$ which intertwines the unit maps $\eta_{\B_i}$ and the multiplication maps $m_{\B_i}$ such that the map

\begin{align*}
\alpha_{12}: B(L^2(\B_1)) &\rightarrow B(L^2(\B_2)) \otimes \C \\
T &\mapsto P(T\otimes 1)P^*
\end{align*}

satisfies $P(A_1 \otimes 1) = (A_2 \otimes 1)P$.

For the associated operator systems $\S_i = \V_i$ defined in \Cref{ex:qgraph}, one can show that $\alpha_{12}(\S_1) \subseteq \S_2 \otimes \C$. It then immediately follows that $\alpha_{12}(\S_1^k) \subseteq \S_2^k \otimes \C$.
\end{proof}

\subsection{Monoidal equivalence and bigalois extensions} \label{subsecmonoidal}

We begin by viewing a compact quantum group as a Hopf $*$-algebra.

\begin{definition} \label{defn:hopf}
A {\it Hopf $*$-algebra} is a pair $(\A, \Delta)$ where $\A$ is a unital $*$-algebra and $\Delta: \A \rightarrow \A \otimes \A$ is a unital $*$-homomorphism that satisfies $(\Delta \otimes \id) \circ \Delta = (\id \otimes \Delta) \circ \Delta$ and for some $*$-homomorphism $\epsilon : \A \rightarrow \bC$ satisfying $(\epsilon \otimes \id) \Delta(a) = a = (\id \otimes \epsilon) \Delta(a)$ and for some anti-homomorphism $S : \A \rightarrow \A$ such that $m(S \otimes \id) \Delta(a) = \epsilon(a) 1 = m(\id \otimes S) \Delta(a)$ where $m: \A \otimes \A \rightarrow \A$ is the multiplication map.
\end{definition}

\begin{definition}
Given a compact quantum group $G$, we define the Hopf $*$-algebra $\O(G)$ to be the $*$-algebra generated by the coefficients $u_{ij}$ of the fundamental representation $U = [u_{ij}]$.
\end{definition}

\begin{theorem}
For any compact quantum group $G$, the pair $(\O(G),\Delta)$ is a Hopf $*$-algebra.
\end{theorem}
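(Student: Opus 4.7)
The plan is to explicitly define the counit $\epsilon$ and antipode $S$ on matrix coefficients of irreducible unitary representations of $G$, and then verify the Hopf $*$-algebra axioms on generators using unitarity. Throughout, I would lean on the Woronowicz theorem stated earlier in the preliminaries: the matrix coefficients of irreducible unitary representations span a dense $*$-subalgebra of $C(G)$, and for a compact matrix quantum group this subalgebra is exactly $\O(G)$ since every irreducible representation appears as a subrepresentation of a tensor product of copies of $U$ and $\ol{U}$.

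First I would verify that $\O(G)$ is closed under $\Delta$. For any finite-dimensional unitary representation $v = [v_{ij}]$, the representation equation $(\id \otimes \Delta)(v) = v_{12} v_{13}$ unpacks on matrix entries to
\[
\Delta(v_{ij}) = \sum_k v_{ik} \otimes v_{kj} \in \O(G) \otimes \O(G),
\]
and coassociativity of $\Delta$ on $\O(G)$ is inherited from coassociativity at the $C^*$-algebraic level. Next I would define $\epsilon : \O(G) \to \bC$ on generators by $\epsilon(v_{ij}) := \delta_{ij}$ and extend multiplicatively to a unital $*$-homomorphism, and define $S : \O(G) \to \O(G)$ on generators by $S(v_{ij}) := v_{ji}^*$ and extend as a $*$-preserving antihomomorphism.

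The main obstacle is showing that $\epsilon$ and $S$ are in fact well defined on $\O(G)$: a given element of $\O(G)$ may have many presentations as a linear combination of matrix coefficients of different (possibly reducible) unitary representations, so one must ensure the definitions are independent of these choices. I would handle this via the intertwiner structure in $\Rep(G)$: for any $T \in \Mor(u,v)$ between finite-dimensional unitary representations, the relation $(T \otimes 1)u = v(T \otimes 1)$ gives explicit linear identities among matrix coefficients, and a direct check shows that the assignments $v_{ij} \mapsto \delta_{ij}$ and $v_{ij} \mapsto v_{ji}^*$ respect these identities. Equivalently, one can define $\epsilon$ and $S$ first on a fixed choice of representative from each class in $\Irr(G)$ and then extend by the Peter-Weyl decomposition; the formulas are manifestly invariant under change of orthonormal basis within each isotypical component.

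Finally I would verify the Hopf algebra axioms on generators. For the counit,
\[
(\id \otimes \epsilon)\Delta(v_{ij}) = \sum_k v_{ik} \delta_{kj} = v_{ij} = \sum_k \delta_{ik} v_{kj} = (\epsilon \otimes \id)\Delta(v_{ij}).
\]
For the antipode, the unitarity relations $v^* v = 1 = v v^*$ give $\sum_k v_{ki}^* v_{kj} = \delta_{ij} = \sum_k v_{ik} v_{jk}^*$, so
\[
m(S \otimes \id)\Delta(v_{ij}) = \sum_k v_{ki}^* v_{kj} = \delta_{ij} \cdot 1 = \epsilon(v_{ij}) \cdot 1 = \sum_k v_{ik} v_{jk}^* = m(\id \otimes S)\Delta(v_{ij}).
\]
Since both sides of each axiom are algebra or anti-algebra homomorphisms and agree on generators, the identities extend to all of $\O(G)$, yielding the Hopf $*$-algebra structure claimed.
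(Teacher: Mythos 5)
The paper does not actually prove this statement: it is quoted as a standard fact from Woronowicz's theory (cf.\ the references \cite{woronowicz, NeshveyevTuset, dijkhuizen}), so your attempt can only be compared with the standard argument. Your overall strategy --- realize $\O(G)$ as the linear span of matrix coefficients of finite-dimensional unitary representations, use the Peter--Weyl decomposition and linear independence of coefficients of inequivalent irreducibles to make $\epsilon$ and $S$ well defined, and verify the axioms on generators --- is exactly that standard argument, and your treatment of $\Delta$ and of the counit is correct.

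There is, however, a genuine gap in the antipode. You set $S(v_{ij}) = v_{ji}^*$ and propose to ``extend as a $*$-preserving antihomomorphism,'' but for a general compact quantum group no such extension exists: the antipode satisfies $S(a^*) = (S^{-1}(a))^*$, which coincides with $S(a)^*$ only when $S^2 = \id$, i.e.\ when $G$ is of Kac type. Concretely, $\O(G)$ is generated \emph{as an algebra} by the $v_{ij}$ together with the $v_{ij}^*$, so the antipode axiom must also be verified on the $v_{ij}^*$ (equivalently, on the coefficients of $\ol{v}$), where it forces $\sum_k S(v_{ik}^*)\, v_{kj}^* = \delta_{ij}1$, i.e.\ $S(\ol{v}) = \ol{v}^{\,-1}$ as a matrix over $\O(G)$. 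With your $*$-preserving choice $S(v_{ik}^*) = v_{ki}$ this becomes $\sum_k v_{ki} v_{kj}^* = \delta_{ij}1$, which is precisely the unitarity of $\ol{v}$ --- false in general, e.g.\ for $U_F^+$ with $F$ not a scalar multiple of a unitary, an example appearing in this very paper. (The tempting shortcut $\ol{v}^{\,-1} = v^{T}$ fails because $(AB)^T = B^T A^T$ requires commuting entries.) The repair is standard: since $\ol{v}$ is equivalent to a unitary representation via a canonical positive intertwiner, every coefficient of $\ol{v}$ is already a coefficient of some unitary representation, so the single rule $S(w_{ij}) = w_{ji}^*$ for \emph{all} unitary $w$ determines a linear (not $*$-preserving) anti-homomorphism on all of $\O(G)$, and the axiom on $v_{ij}^*$ then follows from the unitarity of that equivalent representation. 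Note that \Cref{defn:hopf} only demands an anti-homomorphism, so nothing in the statement requires $*$-preservation; dropping that claim and checking the axiom on the conjugate coefficients closes the gap. The remaining steps (extension of the convolution identities from algebra generators, well-definedness via intertwiner relations) are sound.
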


\begin{definition}
We say that the quantum metric spaces $(\M_1, \H_1,\V_t)$ and $(\M_2, \H_2, \W_t)$ are {\it $A^*$ quantum isometric} if $\O(G^{\V,\W}) \neq 0$, and we write $(\M_1,\V_t) \cong_{A^*} (\M_2,\W_t)$. We say they are are {\it $C^*$ quantum isometric} if $\O(G^{\V,\W})$ has a $C^*$-representation, and we write $(\M_1,\V_t) \cong_{C^*} (\M_2,\W_t)$. Finally, we say they are {\it $qc$-quantum isometric} if $\O(G^{\V,\W})$ admits a tracial state and write $(\M_1,\V_t) \cong_{qc} (\M_2,\W_t)$, so that by the work of \cite{lupini} this is the same as the existing notation for classical metric spaces.
\end{definition}

If we set $\A = \O(G)$ viewed as a Hopf $*$-algebra, then we call a unital $*$-algebra $Z$ a {\it left $\A$ $*$-comodule algebra} if it is equipped with a unital $*$-homomorphism $\alpha : Z \rightarrow \A \otimes Z$ satisfying

\[ (\id \otimes \alpha) \circ \alpha = (\Delta \otimes \id ) \circ \alpha \qquad (\epsilon \otimes \id ) \circ \alpha = \id \]

In the same vein, we call a untial $*$-algebra $Z$ a {\it right $\A$ $*$-comodule algebra} is equipped with a unital $*$-homomorphism $\beta : Z \rightarrow Z \otimes \A$ satisfying

\[ (\beta \otimes \id) \circ \beta = (\id \otimes \Delta) \circ \beta \qquad (\id \otimes \epsilon) \circ \beta = \id. \]

We call a left $\A$ $*$-comodule algebra a {\it left $\A$ Galois extension} if the linear map

\begin{align*}
\kappa_l : Z \otimes Z &\rightarrow \A \otimes Z \\
\kappa_l(x \otimes y) &= \alpha(x) (1 \otimes y)
\end{align*}

is bijective. A right $\A$ $*$-comodule alebra is called a {\it right $\A$ Galois extension} if the corresponding linear map

\begin{align*}
\kappa_r : Z \otimes Z &\rightarrow Z \otimes \A \\
\kappa_r(x \otimes y) &= (x \otimes 1) \beta(y)
\end{align*}

is bijective. For Hopf $*$-algebras $\A$ and $\B$, we call the unital $*$-algebra $Z$ an {\it $\A-\B$ bigalois extension} if it is both a left $\A$ Galois extension and a right $\B$ Galois extension and for left and right comodule maps $\alpha$ and $\beta$, then

\[ (\id \otimes \beta) \alpha = (\alpha \otimes \id) \beta : Z \rightarrow \A \otimes Z \otimes \B. \]

We define a {\it state} on a unital $*$-algebra $Z$ to be a linear functional $\omega : Z \rightarrow \bC$ such that $\omega(1)=1$ and $\omega(z^*z) \geq 0$ for all $z \in Z$.

If $Z$ is an $\A-\B$ bigalois extension, then a state $\omega$ is called {\it left-invariant} if $(\id \otimes \omega)(z) = \omega(z)1_\A$ for all $z \in Z$. Similarly, it is called {\it right-invariant} if $(\omega \otimes \id)(z) = \omega(z)1_\B$ for all $z \in Z$. The state $\omega$ is called {\it bi-invariant} if it is both left and right-invariant.

\begin{theorem}\label{thrm:bichon} (\cite{bichon2,bichon3})
Consider two compact quantum groups $G_1$ and $G_2$. Then $G_1$ and $G_2$ are monoidally equivalent if and only if there exists a $\O(G_1)-\O(G_2)$-bigalois extension $Z$ equipped with a bi-invariant state $\omega$.

Moreover, $\omega$ is a tracial state if and only if both $G_1$ and $G_2$ are of Kac type (that is, the Haar states are tracial).
\end{theorem}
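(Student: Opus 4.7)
The plan is to use the Tannaka-Krein-Woronowicz philosophy and realise the bigalois extension $Z$ as a universal ``transporter algebra'' between the two natural fibre functors on $\Rep(G_1)$, namely the canonical forgetful functor and the composition of $\varphi$ with the forgetful functor on $\Rep(G_2)$.

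For the $(\Rightarrow)$ direction, a monoidal equivalence as in \Cref{defn:monoidallyequivalent} can be promoted to a unitary monoidal $*$-functor $\varphi : \Rep(G_1) \to \Rep(G_2)$. I would define $Z$ as the universal unital $*$-algebra generated by the entries of a family of matrices $\{Z_u\}_{u \in \Rep(G_1)}$ with $Z_u \in B(\H_{\varphi(u)}, \H_u) \otimes Z$ subject to the relations: (a) each $Z_u$ is unitary; (b) $(T \otimes 1) Z_u = Z_v (\varphi(T) \otimes 1)$ for every $T \in \Mor(u, v)$; and (c) $Z_{u \otimes v} = (Z_u)_{13} (Z_v)_{23}$ under the canonical isomorphism $\H_{\varphi(u \otimes v)} \cong \H_{\varphi(u)} \otimes \H_{\varphi(v)}$ supplied by $\varphi$. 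The left $\O(G_1)$-coaction $\alpha$ and right $\O(G_2)$-coaction $\beta$ on $Z$ are induced by the matrix-coefficient expansions of $u$ and $\varphi(u)$ respectively; since these act on different legs of $Z_u$, the bicomodule compatibility $(\id \otimes \beta)\alpha = (\alpha \otimes \id)\beta$ is automatic. Bijectivity of the canonical maps $\kappa_l, \kappa_r$ is forced by the unitarity of the $Z_u$, which supplies explicit inverses block by block in a Peter-Weyl decomposition of $Z$. The bi-invariant state $\omega$ is defined by projecting $Z$ onto its joint trivial spectral subspace for the two coactions.

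For the $(\Leftarrow)$ direction, given a bigalois extension $(Z, \alpha, \beta)$ equipped with a bi-invariant state, I would define $\varphi$ on each finite-dimensional unitary $G_1$-representation $u$ on $\H_u$ via the cotensor product of $\H_u$ with $Z$ over $\O(G_1)$, which inherits a right $\O(G_2)$-comodule structure from $\id \otimes \beta$. The left Galois condition ensures that this cotensor has dimension $\dim \H_u$, so it is a finite-dimensional $G_2$-representation; tensor preservation follows from the universal property of the cotensor product combined with the bicomodule compatibility, while the $*$-structure and the action on morphisms are inherited from $\H_u \otimes Z$. For the moreover clause, the bi-invariant state $\omega$ is uniquely determined by the bicomodule structure and can be obtained by averaging $Z$ against both Haar states $h_1$ and $h_2$, so $\omega$ is tracial precisely when both $h_1$ and $h_2$ are, which is exactly the Kac condition for $G_1$ and $G_2$.

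The main technical hurdle will be the forward direction: showing that the universal algebra $Z$ is nonzero and that the candidate bi-invariant state is well-defined and faithful on each spectral subspace. This requires a careful Peter-Weyl style decomposition of $Z$ indexed by $\Irr(G_1)$, or equivalently $\Irr(G_2)$ via $\varphi$, together with an explicit identification of the joint trivial isotypic component, in the spirit of Bichon's original analysis in \cite{bichon2, bichon3}.
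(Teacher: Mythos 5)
The paper offers no proof of this statement---it is imported verbatim from Bichon--De Rijdt--Vaes \cite{bichon2,bichon3}---so the only meaningful benchmark is their argument, and your outline does follow its architecture: a linking algebra built from the monoidal equivalence in the forward direction, and the cotensor/spectral-subspace functor in the reverse direction. That said, two of your steps are wrong or under-justified as written. In the ($\Leftarrow$) direction you assert that the left Galois condition forces $\dim(\H_u \,\square_{\O(G_1)}\, Z) = \dim \H_u$. This is false: monoidal equivalences of compact quantum groups preserve quantum dimensions, not dimensions (e.g.\ $SU(2)$ is monoidally equivalent to $O_F^+$ for suitable $F \in GL_n(\bC)$ with $n$ arbitrary $\geq 2$). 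The cotensor product has dimension $\dim \H_{\varphi(u)}$; what the Galois condition together with cosemisimplicity actually yields is that the cotensor functor is a monoidal equivalence and hence sends rigid (= finite-dimensional) comodules to rigid ones. The step survives, but not for the reason you give.

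The second problem is the ``moreover'' clause. Averaging against the Haar states proves nothing here: $(h_1 \otimes \id)\circ\alpha$ is the projection onto the left-invariant part, so the resulting identity $\omega = (h_1 \otimes \omega)\circ\alpha$ is a tautology and carries no information about traciality. The actual argument computes $\omega$ explicitly on each spectral subspace $B(\H_{\varphi(x)}, \H_x)$ of $Z$ in terms of the positive modular matrices $Q_x$, $Q_{\varphi(x)}$ coming from the conjugate representations, identifies the modular automorphism of $\omega$ as conjugation by these matrices, and observes that this automorphism is trivial---i.e.\ $\omega$ is a trace---exactly when all the $Q_x$ are scalar, which is the Kac condition for both $G_1$ and $G_2$ simultaneously. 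You also correctly flag non-vanishing of the universal algebra $Z$ as the main hurdle in the forward direction, but the standard resolution is not a Peter--Weyl analysis of the universal object: one instead constructs the concrete model $\bigoplus_{x \in \Irr(G_1)} B(\H_{\varphi(x)}, \H_x)$, which is visibly nonzero, and verifies that it realizes the universal property; faithfulness of $\omega$ is then read off from its explicit block form. With these repairs your outline matches the known proof; without them the backward direction and the entire ``moreover'' equivalence are unsupported.
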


It was first shown in \cite{bichon2} that there is a simple criterian for a bigalois extension to admit such a bi-invariant state $\omega$. We will show this argument later in the next section.

\subsection{A $G^\V-G^\W$ bigalois extension}

Our next goal is to show that $\O(G^{\V,\W})$ admits a natural structure as a $G^\V-G^\W$ bigalois extension.

\begin{theorem} \label{thrm:bigalois_extension}
If $\O(G^{\V,\W})$ is non-zero, then there exists a $G^\V - G^\W$ bigalois extension.
\end{theorem}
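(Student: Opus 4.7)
The plan is to equip $\O(G^{\V,\W})$ with natural coactions of $\O(G^\V)$ and $\O(G^\W)$, verify that these make it into a bicomodule algebra, and then exhibit explicit two-sided inverses for the canonical Galois maps by exploiting the unitarity of the generating matrix $P$.

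First I would write candidate coactions on generators. Let $U = [u_{ij}]$ and $V = [v_{ij}]$ denote the fundamental representations of $G^\V$ and $G^\W$ and let $P = [p_{ij}]$ be the generating unitary of $\O(G^{\V,\W})$. The natural formulas are
\[ \beta(p_{ij}) = \sum_{k} p_{ik} \otimes u_{kj}, \qquad \alpha(p_{ij}) = \sum_{k} v_{ik} \otimes p_{kj}. \]
To extend these to unital $*$-homomorphisms via the universal property of $\O(G^{\V,\W})$, I would check that they preserve the defining relations. The unitarity relations on $P$ transfer to the targets because $U$ and $V$ are themselves unitary, and the counit identities are immediate. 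The $\V_t \to \W_t$ intertwining condition built into the definition of $\alpha_{\V,\W}$ transfers similarly: $U$ preserves $\V_t$ by the defining property of $G^\V$ and $V$ preserves $\W_t$ by the defining property of $G^\W$, so composing with $\beta$ or $\alpha$ keeps the image inside $\W_t \otimes \O(G^\V) \otimes \O(G^{\V,\W})$ or $\O(G^\W) \otimes \W_t \otimes \O(G^{\V,\W})$ as appropriate. The coaction axioms and the bicomodule compatibility $(\id \otimes \beta) \circ \alpha = (\alpha \otimes \id) \circ \beta$ are then routine computations on generators using coassociativity of the Hopf algebra structures on $\O(G^\V)$ and $\O(G^\W)$.

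The main technical obstacle is establishing that the canonical maps
\[ \kappa_r : \O(G^{\V,\W}) \otimes \O(G^{\V,\W}) \to \O(G^{\V,\W}) \otimes \O(G^\V), \qquad \kappa_l : \O(G^{\V,\W}) \otimes \O(G^{\V,\W}) \to \O(G^\W) \otimes \O(G^{\V,\W}) \]
are bijective. The strategy here is to produce explicit two-sided inverses. The key lever is unitarity of $P$, which gives $\sum_{k} p_{ik}^* p_{jk} = \delta_{ij} 1$ and $\sum_{k} p_{ki}^* p_{kj} = \delta_{ij} 1$. Using these, I would define $\kappa_r^{-1}$ on generators by a formula of the shape $\kappa_r^{-1}(x \otimes u_{ij}) = \sum_k x\, p_{ki}^* \otimes p_{kj}$, extend it multiplicatively using the antipode of $\O(G^\V)$ to handle $u_{ij}^*$, and check that it is a two-sided inverse of $\kappa_r$ by direct computation, reducing everything to repeated application of the unitarity identities. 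The Hopf algebra antipode formulas here play the role of ``passing through $P^*$.'' The analogous construction using $V^*$ and unitarity of $V$ handles $\kappa_l$. I expect the principal bookkeeping difficulty to be keeping the index conventions straight and confirming that the putative inverse is well-defined on the quotient $\O(G^{\V,\W})$ rather than just on the free $*$-algebra; once this is done, the Galois property and hence the conclusion follow.
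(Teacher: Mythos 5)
Your formulas are the right ones on generators: the coactions you write down agree with the bicomodule structure the paper uses, and the candidate inverse $\kappa_r^{-1}(x \otimes u_{ij}) = \sum_k x\, p_{ki}^* \otimes p_{kj}$ does invert $\kappa_r$ on elements of the form $x \otimes u_{ij}$ by the two unitarity identities for $P$. But there is a genuine gap at the step you dismiss as bookkeeping. The map you need to construct is a linear map out of $\O(G^{\V,\W}) \otimes \O(G^\V)$, so the well-definedness problem lives in the \emph{second} tensor factor, $\O(G^\V)$ --- not in ``the quotient $\O(G^{\V,\W})$'' as you state. Concretely, you must show that the assignment $u_{ij} \mapsto \sum_k p_{ki}^* \otimes p_{kj}$, extended anti-multiplicatively in the first leg and multiplicatively in the second (it cannot be extended ``multiplicatively'' tout court, since the first leg plays the role of an antipode), annihilates the entire defining ideal of $\O(G^\V)$ --- including the relations encoding that $\delta$ is a trace-preserving $*$-homomorphism and that $\alpha_\V$ preserves each $\V_t$. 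Verifying this is the actual content of the theorem, and it is not a consequence of unitarity of $P$ alone. A second, related issue: to handle $u_{ij}^*$ you need the elements $p_{ki}^*$ to satisfy the defining relations of a quantum isometry in the reverse direction, and because the basis vectors $e_i$ of $\M_1$ need not be self-adjoint this involves the twist matrices $F, G$ with $Fe_i = e_i^*$; the correct formula for the ``inverse'' of $p^*$ is $F^t \overline{p}\, G^{-t}$, not simply $p$.

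The paper resolves exactly these points by a different packaging: it introduces the companion algebra $\O(G^{\W,\V})$, constructs cocomposition maps $\gamma_\V : \O(G^\V) \to \O(G^{\W,\V}) \otimes \O(G^{\V,\W})$ and $\gamma_\W$ via the universal properties (this is where the well-definedness on $\O(G^\V)$ is actually checked), and defines coinversion maps $S_{\V,\W}, S_{\W,\V}$ carrying $q \mapsto p^*$ and $q^* \mapsto G^t \overline{p} F^{-t}$. The four algebras then form a connected cogroupoid, and the bijectivity of the Galois maps is obtained by citing \cite{bichon1} and \cite{ours} rather than by exhibiting inverses by hand --- your formula $\sum_k x\, p_{ki}^* \otimes p_{kj}$ is precisely $(S_{\V,\W} \otimes \id)\gamma_\V$ in disguise. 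So your plan is a hands-on unpacking of the cogroupoid argument; to complete it you would need to carry out the universal-property verification that the paper performs when building $\gamma_\V$ and $S_{\V,\W}$, at which point you have reconstructed the paper's proof.
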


\begin{proof}
One can show that $\O(G^{\V,\W})$ is an $\O(G^\W)-\O(G^\V)$ bicomodule following the proof in Section 4 of \cite{ours}.

We define ``cocomposition'' $*$-morphisms

\begin{align*}
\gamma_\W : G^\W &\rightarrow G^{\V,\W} \otimes G^{\W,\V} \qquad& \gamma_\W(u_{ij}) &= \sum_k p_{ik} \otimes q_{kj} \\
\gamma_\V : G^\V &\rightarrow G^{\W,\V} \otimes G^{\V,\W} \qquad& \gamma_\V(u_{ij}) &= \sum_k q_{ik} \otimes p_{kj} \\
\end{align*}

where $q=[q_{ij}]$ is the matrix of generators of $G^{\W,\V}$.

We now have a two-object cocategory $\C$: the four algebras $G^\V, G^\W, G^{\V,\W}$ and $G^{\W,\V}$ are thought of as dual to ``spaces of morphisms'' between two objects ($x \mapsto x$ for $G^\V$, $x \mapsto y$ for $G^{\V,\W}$, etc) and the $\gamma$ maps are dual to morphism composition.

Next, we make $\C$ into a cogroupoid by defining coinversion maps

\begin{align*}
S_{\V,\W} : G^{\W,\V} &\rightarrow G^{\V,\W} \\
S_{\W,\V} : G^{\V,\W} &\rightarrow G^{\W,\V}
\end{align*}

Let $F = F_\V \in M_n$ and $G = F_\W \in M_m$ be matrices with the property that $Fe_i = e_i^*$ and similarly for $G$, so that $\overline{F} = F^{-1}$ and $\overline{G} = G^{-1}$. Then we have the following involutivity of morphisms on the left equivalent to the equalities on the right

\begin{equation} \label{eqn:deltas}
\begin{aligned}
\delta_\V : \H_1 &\rightarrow \H_1 \otimes G^\V \qquad &(1 \otimes F) \overline{u} = u(1 \otimes F) \\
\delta_\W : \H_2 &\rightarrow \H_2 \otimes G^\W \qquad &(1 \otimes G) \overline{v} = v(1 \otimes G) \\
\delta_{\V,\W} : \H_1 &\rightarrow \H_2 \otimes G^{\V,\W} \qquad &(1 \otimes G) \overline{p} = p(1 \otimes F)
\end{aligned}
\end{equation}

For ease of notation we will start writing $uF$ for $u (1 \otimes F)$. Then we have $G^{-1}pF = \overline{p}$ and $F^{-1}qG = \overline{q}$.

Then we can check that we have a unital algebra homorphism

\begin{align*}
\H_1 &\rightarrow \H_2 \otimes (G^{\V,\W})^{op} \\
f_i &\mapsto \sum_j e_j \otimes p_{ij}^*
\end{align*}

Applying $G$ to both sides and noting that $e_j = FF^{-1}e_j$ then the map above is involutive with respect to the modified $*$-structure $\star$ on $(G^{\V,\W})^{op}$ given by $(p^*)^\star = (F^{-1}p^*F)^t$.

The universal property of $G^{\V,\W}$ implies that the homomorphism above factors as $(\id \otimes S_{\V,\W}) \delta_\V$ where $S_{\V,\W}$ is a conjugate-linear anti-morphism defned by

\begin{align*}
&S_{\V,\W} : G^{\W,\V} \rightarrow G^{\V,\W}  \qquad &q \mapsto p^* \quad &q^* \mapsto G^t \overline{p} F^{-t} \\
&S_{\W,\V} : G^{\V,\W} \rightarrow G^{\W,\V} \qquad &p \mapsto q^* \quad &p^* \mapsto F^t \overline{p} G^{-t}
\end{align*}

Since we have shown that $\C$ is a connected cogroupoid then if $G^{\V,\W}$ is non-zero, \cite{bichon1} and \cite{ours} shows that $G^{\V,\W}$ is a $G^\V - G^\W$ bigalois extension.
\end{proof}

Utilizing the work in \cite{bichon2}, we can show the existence of a bi-invariant state $\omega$ for the $G^\V - G^\W$ bigalois extension, referenced in \Cref{thrm:bichon}.

Consider $n \in \bN$ and matrices $F_i \in GL_n(\bC)$. Define $C(U_{F_1}^+, U_{F_2}^+)$ to be the unital $*$-algebra generated by the coefficients $z_{ij}$ of the $n_1\times n_2$ matrix $z = [z_{ij}] \in M_{n_1,n_2}(C(U_{F_1}^+, U_{F_2}^+))$ he relaqtions that $z$ and $F_1 \overline{z} F_2^{-1}$ are unitary where $\overline{z} = [z_{ij}^*]$. Note that if $C(U_{F_1}^+, U_{F_2}^+) \neq 0$ then $C(U_{F_1}^+, U_{F_2}^+)$ is a $C(U_{F_1}^+) - C(U_{F_2}^+)$ bigalois extension with respect to the bicomodule structure given by

\begin{align*}
\alpha_{F_1,F_2} : C(U_{F_1}^+, U_{F_2}^+) \rightarrow C(U_{F_1}^+) \otimes C(U_{F_1}^+, U_{F_2}^+) & \qquad & \alpha_{F_1,F_2}(z_{ij}) = \sum_{k=1}^{n_1} u_{ik} \otimes z_{kj} \\
\beta_{F_1,F_2} : C(U_{F_1}^+, U_{F_2}^+) \rightarrow C(U_{F_1}^+, U_{F_2}^+) \otimes C(U_{F_2}^+) & \qquad & \beta_{F_1,F_2}(z_{ij}) = \sum_{\ell=1}^{n_2} z_{i\ell} \otimes v_{\ell j}
\end{align*}

where $u = [u_{ij}]$ is the fundamental representation of $U_{F_1}^+$ and $v = [v_{ij}]$ is the fundamental representation of $U_{F_2}^+$.

\begin{theorem} \label{thrm:invariant_state} (\cite{bichon2})
Let $G$ be a compact quantum group and $(Z,\alpha)$ a left $\O(G)$-Galois extension. Let $F \in GL_n(\bC)$ be such that $G < U_F^+$ with corresponding surjective morphism $\pi : \O(U_F^+) \rightarrow O(G)$. If there exists $F_1 \in GL_{n_1}(\bC)$ and a surjective $*$-homomorphism $\sigma : \O(U_F^+, U_{F_1}^+) \rightarrow Z$ satisfying $\alpha \circ \sigma = (\pi \otimes \sigma) \alpha_{F,F_1}$ then $Z$ admits a left-invariant state $\omega : Z \rightarrow \bC$.
\end{theorem}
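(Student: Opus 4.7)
The strategy is to exploit the universality of $\O(U_F^+, U_{F_1}^+)$ as a bigalois extension and to transport a canonical left-invariant state from it down to $Z$ via the surjection $\sigma$. Concretely, I would first produce a left-invariant state $\Omega : \O(U_F^+, U_{F_1}^+) \to \bC$ with respect to $\alpha_{F,F_1}$, and then define $\omega : Z \to \bC$ by $\omega(\sigma(x)) := \Omega(x)$. The compatibility hypothesis $\alpha\circ\sigma = (\pi \otimes \sigma)\alpha_{F,F_1}$ plays a dual role: it makes $\omega$ well defined (by forcing $\Omega$ to vanish on $\ker \sigma$) and it transfers left-invariance of $\Omega$ into left-invariance of $\omega$.

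To build $\Omega$, I use the Peter--Weyl decomposition of $\O(U_F^+, U_{F_1}^+)$ as a left $\O(U_F^+)$-comodule under $\alpha_{F,F_1}$: the algebra splits into isotypical components indexed by $\Irr(U_F^+)$, and I define $\Omega$ to be the projection onto the trivial isotypical component (which for a Galois extension is $\bC \cdot 1$), extended by zero on the remaining summands. Left-invariance $(\id \otimes \Omega)\alpha_{F,F_1}(x) = \Omega(x) \cdot 1$ follows from the standard argument that any invariant linear functional on a comodule coincides with this projection. The delicate point is showing $\Omega(x^*x) \geq 0$, that is, that $\Omega$ is actually a state; this demands an explicit integration formula built from the matrices $F$ and $F_1$, analogous to a quantum Haar measure, together with a GNS-style model for $\O(U_F^+, U_{F_1}^+)$.

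Descent and invariance are then routine. The intertwining $\alpha\circ\sigma = (\pi\otimes\sigma)\alpha_{F,F_1}$ makes $\sigma$ a morphism of $\O(U_F^+)$-comodules (via $\pi$), so $\ker\sigma$ is an $\alpha_{F,F_1}$-subcomodule. Since $\sigma$ is unital and maps the trivial isotypical component $\bC\cdot 1$ injectively into $Z$, $\ker\sigma$ lies entirely in the nontrivial isotypic summands, on which $\Omega$ vanishes; hence $\omega$ is well defined, unital, and positive (as $\omega(\sigma(x)^*\sigma(x)) = \omega(\sigma(x^*x)) = \Omega(x^*x) \geq 0$). Invariance is the direct computation
\[
(\id \otimes \omega)\alpha(\sigma(x)) = (\id \otimes \omega)(\pi\otimes\sigma)\alpha_{F,F_1}(x) = (\pi\otimes\Omega)\alpha_{F,F_1}(x) = \pi(\Omega(x)\cdot 1) = \omega(\sigma(x))\cdot 1_{\O(G)}.
\]

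The main obstacle is the construction of $\Omega$ as a \emph{positive} linear functional, rather than merely a linear one. Defining $\Omega$ as a Peter--Weyl projector is immediate from general comodule theory, but verifying $\Omega(x^*x) \geq 0$ requires the detailed structural analysis of the universal $(U_F^+, U_{F_1}^+)$-bigalois extension carried out in \cite{bichon2}, where one exhibits a concrete Hilbert space representation in which $\Omega$ appears as a vector state. This is where the universal unitary quantum group structure is essential, and where the bulk of the work of the original proof lies.
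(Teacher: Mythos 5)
This theorem is stated in the paper as a citation to \cite{bichon2} with no proof supplied, so there is no in-paper argument to compare against; I am evaluating your proposal on its own terms. Your overall shape --- construct a left-invariant state $\Omega$ on the universal object $\O(U_F^+,U_{F_1}^+)$ and transport it to $Z$ along $\sigma$ --- is a reasonable first instinct, and you are right that the positivity of $\Omega$ is the genuinely hard input that must come from the structure theory of $U_F^+$.

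However, the descent step contains a real gap. You claim that the intertwining $\alpha\circ\sigma=(\pi\otimes\sigma)\alpha_{F,F_1}$ makes $\ker\sigma$ an $\alpha_{F,F_1}$-subcomodule, hence contained in the nontrivial isotypic summands, hence killed by $\Omega$. This does not follow: for $x\in\ker\sigma$ the relation only gives $\alpha_{F,F_1}(x)\in\ker(\pi\otimes\sigma)=\ker\pi\otimes\O(U_F^+,U_{F_1}^+)+\O(U_F^+)\otimes\ker\sigma$, and since $G$ is in general a \emph{proper} quantum subgroup of $U_F^+$, $\ker\pi\neq 0$. What the hypothesis actually makes $\sigma$ is a morphism of $\O(G)$-comodules, where the source carries the corestricted coaction $(\pi\otimes\id)\alpha_{F,F_1}$; it is not a morphism of $\O(U_F^+)$-comodules. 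Concretely, a nontrivial irreducible of $U_F^+$ can restrict along $\pi$ to a representation of $G$ containing the trivial one (this always happens for proper subgroups, e.g.\ the fundamental representation of $U_F^+$ restricted to $S_n^+$ fixes the all-ones vector), so an element $y'$ of a nontrivial $U_F^+$-isotypic component of $\O(U_F^+,U_{F_1}^+)$ may satisfy $\sigma(y')=\mu 1_Z$ with $\mu\neq 0$; then $y'-\mu 1\in\ker\sigma$ but $\Omega(y'-\mu 1)=-\mu\neq 0$. Thus $\omega(\sigma(x)):=\Omega(x)$ is not well defined, and the identity $\omega\circ\sigma=\Omega$ on which your positivity and invariance computations rest fails. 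The repair is to reverse the logic: define $\omega$ intrinsically on $Z$ as the projection onto the coinvariants $\bC 1_Z$ via $\omega(z)1_Z=(h_G\otimes\id)\alpha(z)$, which exists and is left-invariant as a \emph{linear} functional for any left Galois extension; the surjection from $\O(U_F^+,U_{F_1}^+)$ is then used only to prove that this $\omega$ is \emph{positive}, which is where the content of \cite{bichon2} actually sits.
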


Thus, we have the following theorem:

\begin{theorem} \label{thrm:2}
Let $(\M_1, \V_t)$ and $(\M_2, \W_t)$ be finite quantum metric spaces. If $G^{\V,\W} \neq 0$, then there exists a faithful, bi-invariant, tracial state $\omega : G^{\V,\W} \rightarrow \bC$.
\end{theorem}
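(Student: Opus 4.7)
The plan is to combine \Cref{thrm:bigalois_extension,thrm:invariant_state,thrm:bichon} to produce the desired state in three stages: first obtain a left-invariant state using Bichon's criterion, then use symmetry plus a convolution-style averaging to upgrade to a bi-invariant state, and finally deduce traciality from the Kac property and faithfulness from invariance of the null ideal.

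First I would invoke \Cref{thrm:bigalois_extension} to view $Z := \O(G^{\V,\W})$ as an $\O(G^\V)-\O(G^\W)$ bigalois extension with comodule maps $\alpha$ and $\beta$ as displayed earlier. To apply \Cref{thrm:invariant_state}, I must exhibit matrices $F, F_1$ and a surjection $\sigma : \O(U_F^+, U_{F_1}^+) \to Z$ intertwining the comodule structures. Since $G^\V$ and $G^\W$ are quantum subgroups of the quantum automorphism group $G_{aut}$ of a tracial finite-dimensional von Neumann algebra, they are of Kac type, and one can take $F = F_\V$ and $F_1 = F_\W$ to be the conjugation matrices already introduced in \Cref{eqn:deltas}. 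The universal property of the matrix $p = [p_{ij}]$ generating $\O(G^{\V,\W})$ then yields the required surjection $\sigma$, because any compatible unitary intertwiner satisfying the $F,G$-conjugation relations factors through $\O(G^{\V,\W})$.

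Applying \Cref{thrm:invariant_state} produces a left-invariant state $\omega_L : Z \to \bC$. Swapping the roles of $\V$ and $\W$ (or equivalently running the same argument with the right Galois extension structure) yields a right-invariant state $\omega_R$. To fuse these into a single bi-invariant state, I would use the standard Haar-averaging trick: define
\[
\omega := (\omega_L \otimes \omega_R \otimes \id) \circ (\id \otimes \beta) \circ \alpha^{-1}_{\text{avg}},
\]
or, more cleanly, consider $(\omega_L \otimes \id_{G^\W}) \circ \beta$, which produces a functional $Z \to \O(G^\W)$ that is $\beta$-equivariant; composing with the Haar state of $G^\W$ yields a state that is both left- and right-invariant. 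The routine convolution identity $(h_{G^\V} \otimes \id) \circ \alpha = (\id \otimes h_{G^\W}) \circ \beta$ on coinvariants, which follows from the bicomodule condition and uniqueness of Haar states on the Kac-type groups $G^\V$ and $G^\W$, identifies this construction with both $\omega_L$ and $\omega_R$ and so establishes bi-invariance.

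Once bi-invariance is in hand, traciality of $\omega$ is immediate from \Cref{thrm:bichon}: both $G^\V$ and $G^\W$ are of Kac type, so the second clause of that theorem forces $\omega$ to be a trace. For faithfulness, I would consider the two-sided $*$-ideal $\J = \{z \in Z : \omega(z^*z) = 0\}$. A Cauchy-Schwarz argument together with bi-invariance shows that $\alpha(\J) \subseteq \O(G^\V) \otimes \J$ and $\beta(\J) \subseteq \J \otimes \O(G^\W)$; the cogroupoid (in particular Galois) structure on $Z$ established in the proof of \Cref{thrm:bigalois_extension}, combined with the universal property of $\O(G^{\V,\W})$, then forces $\J = 0$, exactly as in the argument of \cite{bichon2}.

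The main obstacle I expect is the fusion step: verifying that the ostensibly asymmetric construction of \Cref{thrm:invariant_state} really does produce a state that is simultaneously left- and right-invariant. This requires combining the bicomodule identity $(\id \otimes \beta)\alpha = (\alpha \otimes \id)\beta$ with the uniqueness of Haar states on $G^\V$ and $G^\W$ in a careful way, and is where the Kac assumption plays its essential role. Once that identification is established, traciality and faithfulness follow from standard invariant-ideal arguments.
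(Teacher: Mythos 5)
Your proposal follows the paper's proof of \Cref{thrm:2} essentially verbatim: use the cogroupoid/bigalois structure from \Cref{thrm:bigalois_extension}, read off the surjections $\pi:\O(U_{F_\V}^+)\to\O(G^\V)$ and $\sigma:\O(U_{F_\V}^+,U_{F_\W}^+)\to\O(G^{\V,\W})$ from the relations in \Cref{eqn:deltas}, apply \Cref{thrm:invariant_state} to obtain an invariant state, and invoke the Kac property of $G^\V$ and $G^\W$ for traciality. The additional detail you supply on upgrading to bi-invariance and on faithfulness --- which the paper simply defers to \cite{bichon2} --- is essentially correct (the clean construction $h_{G^\W}\circ(\omega_L\otimes\id)\circ\beta$ combined with the bicomodule identity and invariance of the Haar state does yield bi-invariance directly, without any appeal to uniqueness of Haar states), though your first displayed formula involving ``$\alpha^{-1}_{\mathrm{avg}}$'' is not well-defined and should be discarded in favor of the cleaner version you give immediately after.
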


\begin{proof}
Utilizing the proof of \Cref{thrm:bigalois_extension}, we may consider the matrices $F_1 = F_\V$ and $F_2 = F_\W$. Equation \Cref{eqn:deltas} shows that we have surjective $*$-homomorphisms $\pi: \O(U_{F_\V}^+) \rightarrow G^\V$ and $\sigma : C(U_{F_Y}^+, U_{F_X}^+) \rightarrow G^{\V,\W}$ satisfying

\[ \alpha \circ \sigma = (\pi \otimes \sigma) \alpha_{F_Y,F_X}. \]

Then by \Cref{thrm:invariant_state}, $G^{\V,\W}$ admits a $G^\W-G^\V$ invariant state, and it is tracial if and only if both $G^\V$ and $G^\W$ are of Kac type.
\end{proof}

\begin{corollary} \label{thrm:monoidallyequivalent}
Let $(\M_1, \V_t)$ and $(\M_2, \W_t)$ be finite dimensional quantum metric spaces. If $G^{\V,\W}$ is non-zero, then the compact quantum groups $G^\V$ and $G^\W$ are monoidally equivalent, $G^\V \sim^{mon} G^\W$.
\end{corollary}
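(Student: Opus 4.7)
The plan is to assemble the corollary directly from the two main technical results already proved in this section, together with Bichon's characterization of monoidal equivalence recalled in \Cref{thrm:bichon}. Essentially all the work has been done; the corollary is the clean reformulation.

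First, I would invoke \Cref{thrm:bigalois_extension}: since by hypothesis $G^{\V,\W}\neq 0$, the cogroupoid structure constructed there (with objects $\V,\W$, hom-objects $G^\V,G^\W,G^{\V,\W},G^{\W,\V}$, cocomposition maps $\gamma_\V,\gamma_\W$, and coinversions $S_{\V,\W},S_{\W,\V}$) is connected, so $\O(G^{\V,\W})$ carries the structure of a $\O(G^\V)$--$\O(G^\W)$ bigalois extension, with left coaction coming from $\gamma_\W$-type maps and right coaction from $\gamma_\V$-type maps.

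Next, I would apply \Cref{thrm:2} to produce a faithful, bi-invariant, tracial state $\omega : \O(G^{\V,\W}) \rightarrow \bC$. Tracing through the proof of that theorem, $\omega$ is indeed left-$\O(G^\V)$ and right-$\O(G^\W)$ invariant, so it is a bi-invariant state on the bigalois extension in the precise sense of \Cref{subsecmonoidal}.

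With these two ingredients in hand, the hypotheses of \Cref{thrm:bichon} are exactly met: $\O(G^{\V,\W})$ is an $\O(G^\V)$--$\O(G^\W)$-bigalois extension equipped with a bi-invariant state. Bichon's theorem then yields a monoidal equivalence of the representation categories of $G^\V$ and $G^\W$, i.e.\ $G^\V \sim^{mon} G^\W$, completing the proof. There is no real obstacle here: the only subtle point worth noting explicitly is that $G^\V$ and $G^\W$ are both of Kac type (as subgroups of quantum automorphism groups of tracial finite-dimensional von Neumann algebras, per the remark following \Cref{defn:fdquantumisometry}), which is consistent with the traciality of $\omega$ asserted in \Cref{thrm:2} and required by the second half of \Cref{thrm:bichon}.
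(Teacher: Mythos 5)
Your proposal is correct and follows exactly the route the paper takes: combine \Cref{thrm:bigalois_extension} and \Cref{thrm:2} to obtain a bigalois extension with a bi-invariant state, then conclude via \Cref{thrm:bichon}. The paper's own proof is just a terser version of the same two-step citation, so there is nothing to add or correct.
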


\begin{proof}
This is a corollary of \Cref{thrm:2}. By \Cref{thrm:bichon}, $G^\V$ and $G^\W$ are monoidally equivalent.
\end{proof}

\begin{corollary} \label{thrm:qc}
Let $(\M_1, \V_t)$ and $(\M_2, \W_t)$ be finite dimensional quantum metric spaces. Then the following are equivalent:

\begin{enumerate}
\item $(\M_1, \V_t) \cong_{A^*} (\M_2, \W_t)$
\item $(\M_1, \V_t) \cong_{C^*} (\M_2, \W_t)$
\item $(\M_1, \V_t) \cong_{qc} (\M_2, \W_t)$
\end{enumerate}
\end{corollary}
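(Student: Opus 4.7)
The proof will be essentially a formality once we assemble the implications, since the hard work is encapsulated in \Cref{thrm:2}. The plan is to show $(2) \Rightarrow (1)$ and $(3) \Rightarrow (1)$ by triviality, $(1) \Rightarrow (3)$ by invoking \Cref{thrm:2}, and $(3) \Rightarrow (2)$ by the GNS construction.

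First I would record the two easy directions. If $\O(G^{\V,\W})$ admits either a unital $*$-homomorphism into some $B(\H)$ or a state $\omega$ with $\omega(1)=1$, then $\O(G^{\V,\W}) \neq 0$ since the unit cannot be sent to $0$ in either case. This gives $(2) \Rightarrow (1)$ and $(3) \Rightarrow (1)$.

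Next, for $(1) \Rightarrow (3)$, assume $\O(G^{\V,\W}) \neq 0$. By \Cref{thrm:2}, there exists a faithful bi-invariant tracial state $\omega : \O(G^{\V,\W}) \rightarrow \bC$. In particular $\O(G^{\V,\W})$ admits a tracial state, which is precisely the statement $(\M_1,\V_t) \cong_{qc} (\M_2,\W_t)$.

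Finally, for $(3) \Rightarrow (2)$, given a tracial state $\tau$ on $\O(G^{\V,\W})$ I would apply the GNS construction: form the Hilbert space $\H_\tau$ as the completion of $\O(G^{\V,\W})$ with respect to the inner product $\langle x, y\rangle = \tau(y^*x)$, and let $\pi_\tau : \O(G^{\V,\W}) \rightarrow B(\H_\tau)$ be the induced left regular representation. Since $\tau(1) = 1 \neq 0$, the Hilbert space $\H_\tau$ is non-zero and $\pi_\tau$ is a unital $*$-homomorphism into $B(\H_\tau)$, giving the required $C^*$-representation. Hence $(\M_1,\V_t) \cong_{C^*} (\M_2,\W_t)$.

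I expect no serious obstacle here since \Cref{thrm:2} already supplies the tracial state in the hard direction $(1) \Rightarrow (3)$; the remaining implications are standard operator-algebraic arguments.
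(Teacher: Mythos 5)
Your proposal is correct and matches the paper's (implicit) argument: the paper states this result as a direct corollary of \Cref{thrm:2} with no written proof, and your chain $(2)\Rightarrow(1)$, $(3)\Rightarrow(1)$, $(1)\Rightarrow(3)$ via the faithful bi-invariant tracial state, and $(3)\Rightarrow(2)$ via GNS is exactly the intended route. The one point worth making explicit in the GNS step is boundedness: since $\O(G^{\V,\W})$ is generated by the entries $p_{ij}$ of a unitary, one has $p_{ij}^*p_{ij} \leq \sum_k p_{kj}^*p_{kj} = 1$, so left multiplication by each generator (hence by every element) is bounded on $\H_\tau$ and $\pi_\tau$ genuinely lands in $B(\H_\tau)$.
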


\begin{remark}
This theorem says that as soon as $\O(G^{\V,\W})$ is non-zero, then $\O(G^{\V,\W})$ admints a tracial state. This is non-trivial and it has been shown that this phenomena is not true for other games. In \cite{BGH20}, a graph homomorphism from a quantum graph to a classical graph was defined and studied. They showed that the game $*$-algebra is always non-zero when the output graph is $K_4$.
\end{remark}

By restricting our attention to classical metric spaces, we get one of the main results of the paper:

\begin{corollary} \label{thrm:qcclassical}
Let $(X,d_X)$ and $(Y,d_Y)$ be classical metric spaces. Then the following are equivalent:

\begin{enumerate}
\item $X \cong_{A^*} Y$
\item $X \cong_{C^*} Y$
\item $X \cong_{qc} Y$
\end{enumerate}
\end{corollary}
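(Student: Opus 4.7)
The plan is to deduce this corollary directly from \Cref{thrm:qc} by passing from the classical metric spaces $(X,d_X)$ and $(Y,d_Y)$ to their associated $W^*$-quantum metric spaces, built as in \Cref{prop:kwmetric} on the abelian von Neumann algebras $\ell^\infty(X)$ and $\ell^\infty(Y)$. Since these are finite dimensional, \Cref{thrm:qc} applies, and so the three conditions $\cong_{A^*}, \cong_{C^*}, \cong_{qc}$ for the associated $W^*$-quantum metric spaces are automatically equivalent.

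The bridge between the two formalisms is the identification $\O(G^{\V,\W}) = \A(Isom(X,Y))$ recorded immediately after \Cref{defn:qmetricsqisom}: both algebras arise as the universal unital $*$-algebra generated by entries of a magic unitary $U = [u_{x,y}]$ satisfying $(1 \otimes D_X) U = U (1 \otimes D_Y)$. I would then verify that each classical notion from \Cref{defn:qstrategy} matches the corresponding notion for the associated quantum metric spaces: $X \cong_{A^*} Y$ translates to $\O(G^{\V,\W}) \neq 0$; $X \cong_{C^*} Y$ corresponds, via the universal property, to the existence of a $C^*$-representation; and $X \cong_{qc} Y$ corresponds to the existence of a tracial state on $\O(G^{\V,\W})$.

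The only real work is this bookkeeping step, which reduces to unpacking the universal property: a family of projections in an ambient $C^*$-algebra $\B$ satisfying the magic-unitary plus intertwining relations is the same data as a unital $*$-homomorphism $\O(G^{\V,\W}) \to \B$, and conversely. This is a routine check rather than a genuine obstacle, since both the classical and quantum metric definitions are framed around the same universal object. Once this dictionary is in place, the three classical equivalences follow by applying \Cref{thrm:qc} to the pair $(\ell^\infty(X), \V_t^X)$ and $(\ell^\infty(Y), \W_t^Y)$, so no machinery beyond what the paper has already developed is required.
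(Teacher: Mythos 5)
Your proposal is correct and follows essentially the same route as the paper, which simply deduces the corollary as an immediate consequence of \Cref{thrm:qc} applied to the $W^*$-quantum metric spaces associated to $(X,d_X)$ and $(Y,d_Y)$. The only difference is that you spell out the dictionary between \Cref{defn:qstrategy} and the quantum-metric-space notions via the identification $\O(G^{\V,\W}) = \A(Isom(X,Y))$, a bookkeeping step the paper leaves implicit.
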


\begin{proof}
This is an immediate consequence of \Cref{thrm:qc}.
\end{proof}

\begin{remark}
An example of two classical graphs which are quantum isomorphic but not classically isomorphic is shown in \cite{atserias}. Each of the graphs have 24 vertices, and naturally give rise to classical metric spaces of size 24 which are quantum isometric but not classically isometric.
\end{remark}


\bibliography{Nonlocal_games_and_quantum_symmetries_of_quantum_metric_spaces}{}
\bibliographystyle{plain}

\end{document}